\documentclass[a4paper]{article}

\usepackage[pages=all, color=black, position={current page.south}, placement=bottom, scale=1, opacity=1, vshift=5mm]{}

\usepackage{natbib}
\usepackage[margin=1in]{geometry} 

\usepackage{amsmath}
\usepackage{amsthm}
\usepackage{amssymb}
\usepackage{bm}

\usepackage[utf8]{inputenc}
\usepackage{hyperref}
\hypersetup{
	unicode,
	pdfauthor={Author One, Author Two},
	pdftitle={A Vector Space Approach to HTA},
	pdfsubject={Extreme Value Theory},
	pdfkeywords={Extremes, Regular Variation},
	pdfproducer={LaTeX},
	pdfcreator={pdflatex}
}

  \theoremstyle{plain}
\newtheorem{theorem}{Theorem}
\newtheorem{corollary}{Corollary}
\newtheorem{lemma}{Lemma}
\newtheorem{proposition}{Proposition}
  \theoremstyle{remark}
\newtheorem{remark}{Remark}

  \theoremstyle{definition}
\newtheorem{example}{Example}
\newtheorem{definition}{Definition}

\usepackage{graphicx, xcolor}
\graphicspath{{fig/}}

\DeclareMathOperator{\sign}{sign}

\usepackage{ulem}  

\usepackage{algorithm, algpseudocode} 

\usepackage{mathrsfs} 

\title{A Vector Space Approach to Heavy Tailed Analysis}
\author{Kenneth Broadhead$^1$ \and Daniel Cooley$^2$}

\date{
	Colorado State University \\ \texttt{kenneth.broadhead@colostate.edu$^1$, 
        cooleyd@colostate.edu$^2$}\\%
}

\begin{document}
	\maketitle

    \textcolor{red}{\textbf{DISCLAIMER: We have become aware of technical issues surrounding our argument for closure under addition in our vector space. We are working on understanding and correcting this issue. - The Authors}} 
    
	\begin{abstract}

    We construct a vector space whose defining characteristics are rooted in univariate regular variation of random variables. Specifically, the base vector space $\mathbb{V}_b$ consists of random variables whose limiting tail probabilities, when scaled by regularly varying functions of the form $b(s)=s^\alpha L(s)$, are finite. Defining a subspace ${\cal N}_b$ corresponding to random variables in $\mathbb{V}_b$ whose limiting tail probabilities are zero when normalized by $b(s)$ allows the base space $\mathbb{V}_b$ to be partitioned into equivalence classes. We define a vector space $\mathbb{W}_b$ consisting of these equivalence classes, and show its nonzero elements are equivalence classes of regularly varying random variables. We show that a natural norm exists for $\mathbb{W}_b$ if $\alpha > 1$. We show that the equivalence classes and convergence in norm are different than more familiar vector spaces of random variables. Turning our attention to extreme value modeling, we consider finite-dimensional subspaces of $\mathbb{W}_b$ whose basis vectors are jointly regularly varying. We show that in the case $\alpha = 2$, the previously defined tail pairwise dependence measure serves as an inner product. As any finite-dimensional space is complete, we can use the projection theorem to perform linear prediction.

	\noindent\textbf{Keywords:} Heavy Tailed Analysis, Regular Variation, Extreme Value Analysis, TPDM, Dependence Measures, Hilbert Space
	
    \end{abstract}

	\tableofcontents

	\section{Introduction}\label{sec:1}
	\label{sec:intro}

	\subsection{Background and Motivation} \label{sec:1.1}

	Vector space methods see frequent use in classical areas of statistics. 
    Linear models, many time series methods, and covariance-based multivariate analysis can all be developed in terms of vector spaces of random variables. Additional structure on the vector space, in particular an inner product, provides utility to develop statistical methods. In this paper, we develop a vector space arising from univariate regular variation of random variables, with the aim of constructing new statistical methods for extreme value analysis (EVA). As regular variation is an asymptotic property giving structure only to the tail of a random variable, a vector space developed from regular variation will be different than familiar examples. After a space is constructed, we develop some of its properties and further connect this space to the tail pairwise dependence measure.
	
	In the remainder of section 1, we review univariate regular variation, multivariate regular variation, and the tail pairwise dependence measure.
    In section \ref{sec:2}, we construct a vector space of tail equivalent regularly varying random variables.
    In section \ref{sec:3}, we show that if $\alpha > 1$ there exists a natural norm on the vector space constructed in section 2. 
    In section \ref{sec:4}, we restrict attention to model-oriented finite dimensional subspaces of the vector space in section 2. We show that these subspaces are the appropriate setting for the tail pairwise dependence measure, which becomes an inner product on these spaces. This provides a modeling framework for extreme value analysis analogous to classical covariance modeling. In section \ref{sec:5}, we discuss some possible applications, extensions, and avenues for future research.

    \subsection{Univariate Regular Variation and $\alpha$-Scale} 

    Regular variation of random variables is a familiar framework for working with heavy tailed random variables. In essence, if a random variable is regularly varying, its distribution's tails behave asymptotically like power laws. Below, we outline an approach to univariate regular variation that will be used throughout. We begin with some familiar definitions. 

    A nonnegative measurable function $L:(0,\infty)\rightarrow (0,\infty)$ is said to be slowly varying (at infinity) if for all $x>0$
    \[
    \lim_{s\rightarrow \infty} \frac{L(sx)}{L(s)} = 1.
    \]
    A function $f$ of the form $f(s)=s^{\alpha} L(s)$ for $\alpha \in \mathbb{R}$ is said to be regularly varying with index $\alpha$. 
    
    \begin{definition}[Regular Variation] \label{def:regvar}
    	A real-valued random variable $X$ is regularly varying with tail index $\alpha>0$ if there exists a regularly varying function $b(s)=s^\alpha L_b(s)$, and nonnegative real numbers $\kappa_+, \kappa_{-}$, with $\kappa_++\kappa_{-} > 0$ such that for all $x>0$
    	\[
    	\lim_{s\rightarrow \infty} b(s) P(X>sx) = \kappa_+ x^{-\alpha}, \label{eq:1.2.1} \tag{1.2.1}
    	\]
    	\[
    	\lim_{s\rightarrow \infty} b(s) P(X<-sx) = \kappa_{-} x^{-\alpha}.  \label{eq:1.2.2} \tag{1.2.2} 
    	\]
    \end{definition}
    \begin{remark}
        One does not need to require the function $b$ be regularly varying, for any function positive measurable function $b(s)\rightarrow \infty$ for which \ref{eq:1.2.1} and \ref{eq:1.2.2} holds can be shown to be regularly varying.
    \end{remark}
    \begin{remark}
    	Classical treatments of regular variation often focus on positive random variables, and use the function $b(s) = P(X > s)^{-1}$ so that $\kappa=1$. For real valued random variables, $b(s)=P(|X|>s)^{-1}$ is similarly used, and additional tail balance conditions are added to exclude pathological behavior. We accomplish the same by treating each tail separately. Compare with the presentation of regular variation found in \cite{EKM97} and \cite{KulikSoulier2020}.
    \end{remark}
    \begin{remark}
    	Note, if $X$ is regularly varying using function the function $b$, so is $|X|$. If both \eqref{eq:1.2.1} and \eqref{eq:1.2.2} are satisfied, then we have for all $x>0$
    	\[
    	\lim_{s\rightarrow \infty} b(s) P(|X|>sx) = \lim_{s\rightarrow \infty} b(s) (P(X>sx) + P(X<-sx)) = (\kappa_+ + \kappa_{-})x^{-\alpha}.
    	\]
    \end{remark}

    In what is to follow, regularly varying functions with positive index will play an important role, so we give them a name.

    \begin{definition}
        A normalizing function is a (regularly varying) function of the form $b(s)=s^\alpha L(s)$, for some $\alpha>0$ and slowly varying function $L$.
    \end{definition}

    While the normalizing function $b$ in definition \ref{def:regvar} is not unique, there is great utility in fixing a canonical normalizing function $b$ and considering collections of random variables that are regularly varying using $b$ as a normalizing function. Under a single fixed normalizing function, the constants $\kappa, \kappa_+$ and $\kappa_{-}$ are determined. This will be the approach used throughout the remainder of the paper. In particular, once fixed, the normalizing function $b$ will play a central role in defining the vector space in section \ref{sec:2}. Similarly, the constants $\kappa, \kappa_+, \kappa_{-}$ corresponding to a regularly varying random variable with fixed normalizing function will play an important role. In particular, they will be instrumental in defining a natural norm for our vector space in section \ref{sec:3}. As these values will be consequential, we give them a name. 

    \begin{definition}
	   Let $X$ be regularly varying with normalizing function $b$. We call $\kappa := \kappa_+ + \kappa_{-}$ the $\alpha$-scale of $X$ when normalized by $b$. Similarly, we refer to $\kappa_+$ and $\kappa_{-}$, respectively, as the left and right tail $\alpha$-scale of $X$ when normalized by $b$. When the normalizing function $b$ is clear from context, we may simply refer to $\kappa$ as the $\alpha$-scale of $X$, and similarly for $\kappa_+$ and $\kappa_{-}$.
    \end{definition}
    
    \subsection{Multivariate Regular Variation and Dependence Measures} \label{sec:1.4}

    While not central to the construction of our vector space in Sections \ref{sec:2} and \ref{sec:3}, multivariate regular variation does link our work to familiar areas of extreme value analysis (see Section \ref{sec:4}). Here we briefly outline a framework for multivariate regular variation. See the classic references \cite{Res86, Res07} for details. 

    A random vector $\bm X$ taking values in $\mathbb{R}^d$ is regularly varying if there exists a normalizing function $b(s)=s^{\alpha} L(s)$ such that
    \begin{equation}\label{eq:1.3.1}
    	b(s) P(s^{-1} \bm X \in \cdot) \stackrel{v}{\rightarrow} \nu_{\bm X}(\cdot). \tag{1.3.1}
    \end{equation}
    Here ``$\stackrel{v}{\rightarrow}$" denotes vague convergence in $M_+([-\bm{\infty}, \bm{\infty}]^{d} \setminus \{\bm 0\})$, the space of nonzero Radon measures defined on the Borel sets of $[-\bm{\infty}, \bm{\infty}]^{d} \setminus \{\bm 0\}$.
    The limit measure $\nu_{\bm{X}}$ is commonly called the exponent measure and does not put mass on $[-\bm{\infty}, \bm{\infty}]^{d} \setminus (-\bm{\infty}, \bm{\infty})^d$. 
    A comprehensive development of multivariate regular variation begins by defining vague convergence of measures which leads to a sequential definition of regular variation, to which (\ref{eq:1.3.1}) is known to be equivalent \cite{Res86}.
    Although \ref{eq:1.3.1} will be sufficient for our purposes, we briefly mention that more general approaches to regular variation exist, and can extend regular variation to spaces beyond $\mathbb{R}^d$, see \cite{HultLindskog2006, BP19, KulikSoulier2020}.

	The exponent measure is $-\alpha$-homogeneous, and admits the following decomposition when $\bm{X}$ is expressed in polar coordinates:
	\[
	\nu_{\bm{X}}(\{x\in\mathbb{R}^d: ||x||>r, \frac{x}{||x||}\in B\}) = r^{-\alpha} H_{\bm{X}}(B).
	\]
	Here, $H_{\bm{X}}$ is called the angular measure, and is defined on the unit sphere $\mathbb{S}^{d-1}$ corresponding to the chosen norm with its standard Borel $\sigma$-algebra. 
	As we allow an abstract normalizing function $b$ in formulating regular variation above, the angular measure $H_{\bm{X}}$ may not be a probability measure. The angular measure provides a full characterization of extremal dependence between the components of the random vector $\bm{X}$.
	
	In later sections, we will be focusing on summary measures of dependence for bivariate regularly varying random variables. For $X_i$ and $X_j$ univariate components of $\bm X$, we consider
	\[
	\sigma(X_i, X_j) = \int_{\mathbb{S}^1} \mathrm{sgn(\omega_i)}  \mathrm{sgn(\omega_j)} |\omega_i|^{\frac{\alpha}{2}} |\omega_j|^{\frac{\alpha}{2}} dH_{\bm{X}}(\bm{\omega}). \label{eq:1.3.2} \tag{1.3.2}
	\]
	A similar measure was first defined by \cite{LR12} under the name extremal dependence measure (EDM). They considered a nonnegative bivariate regularly varying random vector, their definition specified $H$ to be a probability measure, and the EDM did not depend on the tail index $\alpha$. 
	Compare with \cite{SR04}, section 3, for a closely related dependence measure by the same name. 
	Still in the nonnegative setting, \cite{CT19} dropped the requirement $H$ be a probability measure, and required that the tail index be $\alpha=2$, simplifying \ref{eq:1.3.2}. 
	They then collected pairwise extremal dependence measures of a general $d$ dimensional regularly varying vector into a matrix, defining the tail pairwise dependence matrix (TPDM). 
	\cite{KZ24} extended the tail pairwise dependence matrix to accommodate general $\alpha$ by allowing the underlying dependence measure to depend on $\alpha$, obtaining a measure similar to \ref{eq:1.3.2}, and investigated the properties of this extension. 
	Definition \ref{eq:1.3.2} effectively extends the definition of \cite{KZ24} beyond the nonnegative setting, as was done in \cite{mhatre2024}.
	
	Multivariate regular variation will not be used until section \ref{sec:4}, in which we will return to the dependence measure $\sigma(\cdot, \cdot)$ defined by \ref{eq:1.3.2}. As our notation suggests, we will view $\sigma(\cdot,\cdot)$ as a bivariate function defined on a suitable space, and we will adopt the term tail pairwise dependence measure to refer to $\sigma(\cdot, \cdot)$ as defined in \ref{eq:1.3.2}.

	\section{A Vector Space for Regular Variation} \label{sec:2}
	
	\subsection{A General Construction} \label{sec:2.1}

    The familiar $L^p$ spaces consist of equivalence classes of random variables on a probability space with finite $p^{th}$ moment. Random variables which differ only on a set of measure zero are identified; that is, mapped to the same equivalence class. Our vector space, developed using ideas from univariate regular variation theory, will similarly consist of equivalence classes, but equivalence will be defined by asymptotic tail behavior. Consequently, our equivalence classes will be much more coarse than those in $L^p$. Before we consider equivalence classes, we first establish a suitable space on which to define them.
	
	\begin{definition}
		For a fixed probability space $(\Omega, \Sigma, \mathbb{P})$, denote the collection of all random variables on $\Omega$ by $L^0(\Omega, \Sigma, \mathbb{P})$. Let $b(s)$ be a (regularly varying) normalizing function with index $\alpha>0$, and define	
		\[
		\mathbb{V}_b = \{X\in L^0(\Omega, \Sigma, \mathbb{P}) : \forall x>0, \lim_{s\rightarrow \infty} b(s)P(X>sx)<\infty \text{ and}  \lim_{s\rightarrow \infty} b(s)P(X<-sx)<\infty\}.
		\]
	\end{definition}

    Importantly, the space is defined by the particular normalizing function, and we include the subscript $b$ to make this dependence explicit.

	\begin{proposition}
	The set $\mathbb{V}_b$ is a real vector space under the usual operations of addition and scalar multiplication of random variables. 
	\end{proposition}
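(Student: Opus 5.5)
The plan is to show that $\mathbb{V}_b$ is a subset of the real vector space $L^0(\Omega,\Sigma,\mathbb{P})$ that contains $0$ and is closed under scalar multiplication and addition. The vector space axioms (associativity, distributivity, and so on) are then inherited from $L^0$, since the operations are the usual pointwise ones.

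The zero element and scalar multiplication are routine. For $X=0$ we have $P(0>sx)=P(0<-sx)=0$ for every $x,s>0$, so both limits are $0$. For $c>0$ we have $P(cX>sx)=P(X>s(x/c))$, so the defining limit for $cX$ at $x$ is the limit for $X$ at $x/c>0$, which is finite. For $c<0$ the two tails swap: $P(cX>sx)=P(X<-s(x/|c|))$, and similarly for the other tail. The case $c=0$ reduces to the zero variable. No regular variation of $b$ is needed here.

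The substantive step is closure under addition. For $X,Y\in\mathbb{V}_b$ and $x>0$, I would use the union bound
\[
P(X+Y>sx)\le P(X>sx/2)+P(Y>sx/2),
\]
and the analogous bound for the left tail. Multiplying by $b(s)$ and letting $s\to\infty$, the right-hand side has a finite limit. This gives $\limsup_{s\to\infty} b(s)P(X+Y>sx)<\infty$, and hence the upper half of what is required.

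I expect the main obstacle to be that the definition of $\mathbb{V}_b$, read literally, demands that $\lim_{s\to\infty} b(s)P(X+Y>sx)$ \emph{exist}. The union bound only controls the $\limsup$. Existence of the limit for a sum of arbitrarily dependent summands does not follow from existence for each summand. For instance, take $Y=-X+Z$ with $Z$ chosen so that the tail of $X+Y=Z$ oscillates, while $Y$ still has convergent scaled tails. Alternatively, arrange the dependence between $X$ and $Y$ so that $P(X+Y>sx)$ alternates between the two sides of the union bound along different subsequences $s_n\to\infty$.

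What I would try first is to read the defining conditions as $\limsup_{s\to\infty} b(s)P(X>sx)<\infty$, that is, as $O(1/b(s))$ tail bounds. The argument above then goes through verbatim: scalar multiplication preserves $\limsup$ by the same change of variable, and addition is handled by the union bound. If the literal existence of the limit is to be retained, I would look for a counterexample of the oscillating type rather than a proof. In that case the proposition should be stated for the $\limsup$ version of $\mathbb{V}_b$, with existence of limits recovered only later, on the quotient and jointly regularly varying subspaces considered in Section~\ref{sec:4}.
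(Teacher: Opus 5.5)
Your route is the paper's: closure under scalar multiplication by the change of variable $x\mapsto x/|c|$, with the tails swapping when $c<0$, and closure under addition by the union bound $\{X+Y>sx\}\subset\{X>sx/2\}\cup\{Y>sx/2\}$. The obstacle you flag is real, and it is exactly where the paper's proof breaks. The paper's displayed chain begins with $\lim_{s\to\infty} b(s)P(X+Y>sx)$, whose existence is never justified and cannot come from the union bound; this is the issue behind the authors' disclaimer. In fact the statement as written is false, so no argument can close the gap.

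Your sketch $Y=-X+Z$ can be made concrete. Let $b(s)=s^\alpha$, and let $X$ be symmetric with $P(|X|>t)=t^{-\alpha}$ for $t\ge 1$. Choose $1=a_0<a_1<\cdots$ with $a_{k+1}/a_k\to\infty$ (e.g.\ $a_k=2^{k^2}$), and put $A=\bigcup_k[a_{2k},a_{2k+1})$ and $Y=X\,(1-2\cdot 1\{|X|\in A\})$. The sign of $X$ is a fair coin independent of $|X|$, so $Y\stackrel{d}{=}X$. Hence $X,Y\in\mathbb{V}_b$, and both are even regularly varying. But $X+Y=2X\,1\{|X|\notin A\}$, so $b(s)P(X+Y>s)=\tfrac12 s^\alpha P(|X|>s/2,\ |X|\notin A)$. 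This tends to $0$ along $s/2=\sqrt{a_{2k}a_{2k+1}}$ and to $2^{\alpha-1}$ along $s/2=\sqrt{a_{2k+1}a_{2k+2}}$. The limit therefore fails to exist at $x=1$, and $X+Y\notin\mathbb{V}_b$.

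Under your $\limsup$ reading, your argument is a complete proof. It uses subadditivity of $\limsup$ for the sum and the same change of variable for scalars. ${\cal N}_b$ is also unchanged, since for nonnegative quantities a zero limit and a zero $\limsup$ coincide.

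The repair has a cost, though, and the same example shows it. $X+Y$ lies in the $\limsup$ version of $\mathbb{V}_b$ and is not in ${\cal N}_b$. Yet no element of its class is regularly varying, because perturbing by an element of ${\cal N}_b$ does not remove the oscillation. So Theorem~\ref{thm:1} fails for the resulting quotient, and $\|\cdot\|_b$ is no longer defined by a limit. Existence of limits comes back only on subspaces where joint regular variation is imposed, such as $S_p$ in Section~\ref{sec:4}; note that $(X,Y)$ above is not jointly regularly varying. It does not come back on the quotient as a whole, so your final sentence should be read only in that restricted sense.
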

	
	\begin{proof}		
		First we show closure under addition. Let $X,Y \in \mathbb{V}_b$. We then have
		\[
		\lim_{s\rightarrow \infty} b(s)P\bigg(X+Y>sx\bigg) \leq \lim_{s\rightarrow \infty}
		b(s)\bigg[P\bigg(X>\frac{sx}{2}\bigg)+P\bigg(Y>\frac{sx}{2}\bigg) \bigg]
		< \infty.
		\]
		The first inequality follows from the fact that $\{X+Y>sx\}\subset \{X>\frac{sx}{2}\}\cup\{Y>\frac{sx}{2}\}$; while the second follows from the fact that each limit is less than infinity for each $x' = \frac{x}{2}>0$.
		
		A similar argument holds for the left tail:
		\[
		\lim_{s\rightarrow \infty} b(s)P\bigg(X+Y<-sx\bigg) \leq \lim_{s\rightarrow \infty}
		b(s)\bigg[P\bigg(X<-\frac{sx}{2}\bigg)+P\bigg(Y<-\frac{sx}{2}\bigg) \bigg]
		< \infty.		
		\]
		Hence $X+Y\in \mathbb{V}_b$.
		
		To show closure under scalar multiplication, let $c\in \mathbb{R}$. Note that the case $c=0$ is trivial. For $c\neq 0$, we have
		\begin{eqnarray*}
		\lim_{s\rightarrow \infty} b(s)P(cX>sx) &=& \begin{cases}
			\lim_{s\rightarrow \infty} b(s)P(X>s\frac{x}{c}) & \text{if } c>0\\
			\lim_{s\rightarrow \infty} b(s)P(X<s\frac{x}{c}) & \text{if } c<0		
		\end{cases}, \mbox{ and }\\
		\lim_{s\rightarrow \infty} b(s)P(cX<-sx) &=& \begin{cases}
			\lim_{s\rightarrow \infty} b(s)P(X<-s\frac{x}{c}) & \text{if } c>0\\
			\lim_{s\rightarrow \infty} b(s)P(X>-s\frac{x}{c}) & \text{if } c<0		
		\end{cases},
		\end{eqnarray*}
		and each is finite by virtue of $X\in\mathbb{V}_b$.

	The random variable $X^* \equiv 0$ serves as the additive identity. Furthermore, as $\mathbb{V}_b$ is closed under scalar multiplication, for any $X\in \mathbb{V}_b$ the element $-X$ serves as the additive inverse. The scalar $1 \in \mathbb{R}$ serves as the identity of scalar multiplication. Finally, all associativity, commutativity and distribution laws follow from point-wise operations on (measurable) real-valued functions. Hence $\mathbb{V}_b$ is a real vector space.
	\end{proof}
    
    To define the equivalence relation, we will identify a subspace of $\mathbb{V}_b$ which consists of all random variables whose limiting tail probabilities, when scaled by $b$, approach zero. As this concept is central to our construction, we formalize it with a definition.

    \begin{definition}
		We say an arbitrary random variable $X$ is trivially normalized by a given normalizing function $b$ provided that for all $x>0$
		\[
		\lim_{s\rightarrow \infty} b(s) P(X>sx) = 0, \label{eq:2.1.1} \tag{2.1.1}
		\]
		\[
		\lim_{s\rightarrow \infty} b(s) P(X<-sx) = 0. \label{eq:2.1.2} \tag{2.1.2}
		\]
	\end{definition}

	\begin{definition}
        Given a normalizing function $b$, define ${\cal N}_b$ to be the set of all $X\in \mathbb{V}_b$ such that $X$ is trivially normalized by $b$.
	\end{definition}
    	
	\begin{proposition}
		For a fixed normalizing function $b$, ${\cal N}_b$ is a vector subspace of $\mathbb{V}_b$. 
	\end{proposition}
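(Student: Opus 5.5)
To show that ${\cal N}_b$ is a vector subspace of $\mathbb{V}_b$, I will check three things: it is nonempty (it contains the zero element), it is closed under scalar multiplication, and it is closed under addition. By definition ${\cal N}_b \subset \mathbb{V}_b$, so once these hold, the vector space axioms are inherited from $\mathbb{V}_b$ by the previous proposition.

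\emph{Zero element.} The random variable $X^*\equiv 0$ satisfies $P(X^*>sx)=P(X^*<-sx)=0$ for every $s,x>0$. Hence both \eqref{eq:2.1.1} and \eqref{eq:2.1.2} hold trivially, and $X^*\in{\cal N}_b$.

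\emph{Scalar multiplication.} Take $X\in{\cal N}_b$ and $c\in\mathbb{R}$. The case $c=0$ reduces to the zero element. For $c>0$ I have $P(cX>sx)=P(X>s(x/c))$ and $P(cX<-sx)=P(X<-s(x/c))$. Since $x/c>0$ and $X$ is trivially normalized at every positive level, both limits are zero. For $c<0$ the two tails swap, just as in the case analysis of the previous proof: $P(cX>sx)=P(X<-s(x/|c|))$, and symmetrically for the other tail. So again both limits vanish.

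\emph{Addition.} Take $X,Y\in{\cal N}_b$ and $x>0$. I use the inclusion $\{X+Y>sx\}\subset\{X>sx/2\}\cup\{Y>sx/2\}$, which gives
\[
0\le b(s)P(X+Y>sx)\le b(s)P(X>sx/2)+b(s)P(Y>sx/2).
\]
Both terms on the right tend to $0$ as $s\to\infty$. I will phrase this with $\limsup$ rather than assuming the limit on the left exists, so the squeeze argument gives that the limit exists and equals $0$. The left tail is handled the same way. This is the step to watch. In the previous proposition the disclaimed issue was that the limits of $b(s)P(X+Y>sx)$ need not exist; all that inclusion bound gives there is a finite $\limsup$. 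Here that issue disappears, because the upper bound goes to zero and forces convergence. Finally, $X+Y\in\mathbb{V}_b$ holds directly: the limits are $0<\infty$, so I do not even need to rely on the closure argument from the previous proposition.
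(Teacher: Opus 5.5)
Your proof is correct and follows the paper's argument: for addition, the inclusion $\{X+Y>sx\}\subset\{X>sx/2\}\cup\{Y>sx/2\}$ with a squeeze to zero; for scalars, the same tail-swapping case analysis; the remaining axioms are inherited. Your explicit observation that the squeeze forces the limits to exist and equal $0$ (so $X+Y\in\mathbb{V}_b$ directly, and the remaining axioms hold pointwise anyway) is a worthwhile refinement, since it makes the subspace argument independent of the disputed closure-under-addition step for $\mathbb{V}_b$.
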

	
	\begin{proof}
		We only need to show closure under addition and scalar multiplication, as the remaining vector space axioms follow from the fact ${\cal N}_b$ is a subset of  $\mathbb{V}_b$. First, suppose $X,Y\in {\cal N}_b$. Then
		\[
			0 \leq b(s)P(X+Y>sx) \leq b(s)P\bigg(X>\frac{sx}{2}\bigg) + b(s)P\bigg(Y>\frac{sx}{2}\bigg)\rightarrow 0+0 = 0
		\]
		and 
		\[
			0 \leq b(s)P(X+Y<-sx) \leq b(s)P\bigg(X<-\frac{sx}{2}\bigg) + b(s)P\bigg(Y<-\frac{sx}{2}\bigg)\rightarrow 0+0 = 0.	
		\]
		Hence $X+Y \in {\cal N}_b$.
		
		Closure under scalar multiplication follows in the same manner as the previous proof for $\mathbb{V}_b$. Closure under scalar multiplication gives the ordinary additive inverse for an element $X\in {\cal N}_b$. The remaining properties of vector spaces are inherited from $\mathbb{V}_b$. Hence ${\cal N}_b$ is a subspace of $\mathbb{V}_b$.
	\end{proof}
	
	We are now ready to define equivalence classes. 
    Declare two random variables $X,Y \in \mathbb{V}_b$ to be equivalent, denoted $X \sim_{b} Y$, if and only if $X-Y \in {\cal N}_b$. We then form the space of all such equivalence classes.

	\begin{definition}
		For a fixed normalizing function $b$, let $\mathbb{W}_b$ be the quotient space
		\[
		\mathbb{W}_b = \mathbb{V}_b \mathbin / {\cal N}_b.
		\]
        The quotient space $\mathbb{W}_b$ is a vector space by construction.
	\end{definition}
		\begin{remark}
        To motivate the construction of our vector space $\mathbb{W}_b$, consider a naively constructed space of only regularly varying random variables with normalizing function $b$. 
        Such a space will not satisfy the vector space axioms, as it will not be closed under addition. For example, let $b(s)=s^\alpha$, $X\sim \textit{Frechet}(\alpha)$ and define $Y = -X \cdot 1\{X>10^6\}$. Then both $X$ and $Y$ are regularly varying with normalizing function $b$, but $X+Y$ is not; indeed, the sum is a bounded random variable. 
        In order to achieve closure, a vector space must include members with lighter tails, necessitating the inclusion of random variables trivially normalized by $b$, and leading to the vector space of equivalence classes $\mathbb{W}_b$.
	\end{remark}
    
    Theorem \ref{thm:1} below shows that if $X \in \mathbb{W}_b$, then either $X$ is trivially normalized or $X$ is regularly varying in the sense of definition \ref{def:regvar}.

	\begin{theorem} \label{thm:1}
		The non-zero equivalence classes of $\mathbb{W}_b$ are equivalence classes of regularly varying random variables.
	\end{theorem}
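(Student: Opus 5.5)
The proof will show two things: any representative $X$ of a non-zero class is regularly varying with normalizing function $b$, and every other representative of that class is too, with the same constants $\kappa_+$ and $\kappa_-$. Throughout I read the definition of $\mathbb{V}_b$ as requiring that the limits $\lim_{s\to\infty} b(s)P(X>sx)$ and $\lim_{s\to\infty} b(s)P(X<-sx)$ \emph{exist} and are finite for every $x>0$.

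\emph{Step 1 (the limits have power form).} Fix $X\in\mathbb{V}_b$ and define
\[
f_+(x)=\lim_{s\to\infty} b(s)P(X>sx), \qquad f_-(x)=\lim_{s\to\infty} b(s)P(X<-sx), \qquad x>0.
\]
For fixed $x>0$, set $t=sx$ and write
\[
b(s)P(X>sx)=\frac{b(s)}{b(sx)}\, b(t)P(X>t).
\]
Since $b(s)=s^\alpha L(s)$ with $L$ slowly varying, $b(s)/b(sx)=x^{-\alpha}L(s)/L(sx)\to x^{-\alpha}$. Also $t\to\infty$ as $s\to\infty$, so $b(t)P(X>t)\to f_+(1)$. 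Hence $f_+(x)=f_+(1)x^{-\alpha}$, and likewise $f_-(x)=f_-(1)x^{-\alpha}$. Put $\kappa_+=f_+(1)$ and $\kappa_-=f_-(1)$; both are finite and nonnegative.

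\emph{Step 2 (non-zero forces $\kappa_++\kappa_->0$).} If $\kappa_+=\kappa_-=0$, then $f_\pm\equiv 0$, so $X$ is trivially normalized and $X\in{\cal N}_b$. That means the class of $X$ is the zero class. Hence a non-zero class has $\kappa_++\kappa_->0$, and by Step 1, $X$ satisfies Definition \ref{def:regvar} with normalizing function $b$.

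\emph{Step 3 (class invariance).} Let $Y\sim_b X$, so $Z:=Y-X\in{\cal N}_b$. Fix $x>0$ and $\varepsilon\in(0,1)$. Since $Y=X+Z$, we have the inclusions
\[
\{Y>sx\}\subset\{X>sx(1-\varepsilon)\}\cup\{Z>sx\varepsilon\}, \qquad \{X>sx(1+\varepsilon)\}\subset\{Y>sx\}\cup\{Z<-sx\varepsilon\}.
\]
Multiplying the resulting probability bounds by $b(s)$ and using $Z\in{\cal N}_b$ gives
\[
\kappa_+(1+\varepsilon)^{-\alpha}x^{-\alpha}\le \liminf_{s\to\infty} b(s)P(Y>sx)\le \limsup_{s\to\infty} b(s)P(Y>sx)\le \kappa_+(1-\varepsilon)^{-\alpha}x^{-\alpha}.
\]
Letting $\varepsilon\downarrow0$ shows that $Y$ has the same right-tail constant $\kappa_+$. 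The same argument applied to the left tail gives the same $\kappa_-$. So every member of a non-zero class is regularly varying with the same $\alpha$-scales, which makes $\kappa_\pm$ well defined on $\mathbb{W}_b$.

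\emph{Main obstacle.} The delicate point is Step 1's reliance on the existence of the limits for every $x$. If $\mathbb{V}_b$ were defined only through $\limsup$, the scaling argument would give only homogeneous bounds, not regular variation. I therefore make the existence reading explicit at the start. The remaining steps are routine uses of slow variation and the sandwich bounds above.
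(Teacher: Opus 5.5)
Your proof is correct. It runs on the same mechanism as the paper's (the tail inherits the regular variation of $1/b$), but it gets there more directly and proves more. The paper works only with the survival function $\bar F$ of $|X|$. From a single $x_0$ with $\lim_{s\to\infty} b(s)\bar F(sx_0)=c\in(0,\infty)$ it derives $\bar F(t)\sim t^{-\alpha}L_4(t)$, deduces classical regular variation $\bar F(tx)/\bar F(t)\to x^{-\alpha}$, and then transfers this back to $b(s)P(|X|>sx)\to\kappa x^{-\alpha}$. You skip that detour: the identity $b(s)P(X>sx)=\frac{b(s)}{b(sx)}\,b(sx)P(X>sx)$ lets the regular variation of $b$ give the power law in one line.

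Your version buys two things. First, because you treat each tail separately, you get exactly the one-sided limits (1.2.1)--(1.2.2) with $\kappa_\pm\ge 0$ that Definition \ref{def:regvar} requires. The paper's argument yields only the statement for $|X|$ and leaves the one-sided power laws unproved. Second, your Step 3 shows that $\kappa_\pm$ are constant on equivalence classes. The paper uses this later without proof when it treats $S$ as a functional on $\mathbb{W}_b$. For the theorem itself Step 3 is optional, since every representative of a non-zero class lies in $\mathbb{V}_b\setminus{\cal N}_b$ and Steps 1--2 apply to it directly. The paper's route adds only an explicit display of the $b$-free characterization $\bar F(tx)/\bar F(t)\to x^{-\alpha}$, which your Step 1 gives immediately anyway.

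Your caveat about reading $\mathbb{V}_b$ as requiring the limits to exist is apt, and both proofs need it. Under a $\limsup$ reading the statement would be false: log-periodic tails such as $t^{-\alpha}(c+\delta\sin\log t)$ with $\delta>0$ small enough are counterexamples. On the other hand, existence of the limits is exactly what the paper's closure-under-addition argument for $\mathbb{V}_b$ never checks, and it can fail for sums. This appears to be the issue flagged in the authors' disclaimer.
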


	\begin{proof}		
		Let the normalizing function $b(s)$ be $b(s)=s^\alpha L_1(s)$, where $L_1(s)$ is a slowly varying function. Let $X\in \mathbb{V}_b$ be non-trivially normalized by $b$, and denote the survival function of the random variable $|X|$ by $\bar{F}$. We will show that $|X|$ is regularly varying in the classical sense; that is, we will show $lim_{s\rightarrow \infty} \frac{\bar{F}(sx)}{\bar{F}(s)} = x^{-\alpha}$ for $x>0$.
		
	 	Let $\lim_{s\rightarrow \infty} b(s)P(X>sx)>0$  or $\lim_{s\rightarrow \infty} b(s)P(X<-sx)>0$ for at least one $x=x_0\in(0,\infty)$. Then we have $\lim_{s\rightarrow \infty} b(s)\bar{F}(sx_0) = c \in (0,
		\infty)$ for some positive $x_0$. Thus,
		\[
		\bar{F}(sx_0) \sim c(b(s))^{-1} = c s^{-\alpha} L_2(s) = s^{-\alpha} L_3(s) \quad(s\rightarrow \infty)
		\] 
		where $L_2(s) = \frac{1}{L_1(s)}$, and $L_3(s) = \frac{c}{L_1(s)}$ are both slowly varying. Now let $t=sx_0$; then we have
		\[
		\bar{F}(t) \sim \bigg(\frac{t}{x_0}\bigg)^{-\alpha} L_3\bigg(\frac{t}{x_0}\bigg) = t^{-\alpha}L_4(t) \quad(t\rightarrow \infty). \label{eq:2.1.3} \tag{2.1.3}
		\]
		Here, $L_4(t) = x_0^\alpha L_3(\frac{t}{x_0})$ is still slowly varying.
		
		Thus, using relation \eqref{eq:2.1.3}, it follows that for any $x>0$,  $\bar{F}(tx)\sim (xt)^{-\alpha} L_4(tx)$ as $t\rightarrow \infty$. We then have:
		\[
		\frac{ \frac{\bar{F}(tx)}{(tx)^{-\alpha}L_4(tx)} }{ \frac{\bar{F}(t)}{t^{-\alpha}L_4(t)} } \rightarrow 1 \text{ as } (t\rightarrow \infty).
		\]
		Now, since $L_4$ is slowly varying, it must be the case that 
		\[
		\frac{
		\bar{F}(tx)}{\bar{F}(t)} \rightarrow x^{-\alpha}.
		\]
		Thus, $|X|$ is regularly varying in the classical sense. Now, since the tail of $|X|$ is normalizable at $x_0$ by both $b(s)$ and $\bar{F}(s)$, each of $b$ and $\bar{F}$ must have the same asymptotics. It then follows that the tail of $|X|$ must be normalizable by $b(s)$ in general. That is, $b(s)P(|X|>sx) \rightarrow \kappa x^{-\alpha}$ for all $x>0$ and some $\kappa>0$.
		
		Finally, an element $X\in \mathbb{V}_b$ is mapped to the zero equivalence class if and only if $b(s)P(|X|>sx)\rightarrow 0$ for all $x>0$. So if an element is not mapped to the zero equivalence class, there must be at least one $x_0>0$ such that  $b(s)P(|X|>sx_0)\rightarrow c>0$, in which case it is in fact regularly varying. It then follows that $|X|$ must be normalizable by $b(s)$ with some overall scale $\kappa$, and at least one heavy tail. Thus, all non-zero equivalence classes of $\mathbb{W}_b$ are equivalence classes of regularly varying random variables. 
	\end{proof}
    
    Like the $L^p$ spaces, $\mathbb{W}_b$ is a vector space of equivalence classes of random variables, and linear operations in $\mathbb{W}_b$ do not depend on the representatives of the equivalence classes under consideration. That is, linear operations may be performed on equivalence classes by taking representatives of the equivalence classes under consideration, performing the operations on these representatives, and passing back to equivalence classes. 
    
    In contrast to $L^p$ spaces where equivalence is almost sure equality, the equivalence classes in $\mathbb{W}_b$ are more coarse in that there are random variables that are not equal almost surely but are still in the same equivalence class in $\mathbb{W}_b$. To illustrate, suppose $X_1,X_2$ are in the same equivalence class.
    Then a random variable $Z$ belonging to the same equivalence class can be found that satisfies $X_1 = Z + \epsilon_1$ and $X_2 = Z + \epsilon_2$, where $\epsilon_1, \epsilon_2\in {\cal N}_b$.
    The variables $\epsilon_i, i = 1,2$ are light-tailed in the sense that they are trivially normalized by $b(s)$, but can have unbounded support.
    Far enough into the tails of $X_1$ and $X_2$, the light tailed components become negligible and the contributions from $Z$ come to dominate both.
    Thus, the extreme tails of $X_1$ and $X_2$ will exhibit nearly perfect dependence, which will only strengthen the further into the tails one looks. 
    
    In situations where one is concerned with properties of the extreme (often asymptotic) tail of distributions, the mantra is to let the tail speak for itself. The space $\mathbb{W}_b$ and its equivalence classes provide a formal way of doing this. If two random variables in $\mathbb{V}_b$ have nearly identical asymptotic tails, exhibiting the same asymptotic marginal and dependence structure, they are treated as equivalent regardless of how different they appear in the bulk of their distributions. Similarly, if they have different asymptotic tails, either in dependence or marginal structure, they are treated as distinct elements of $\mathbb{W}_b$ even if they look nearly identical in the bulk of their distributions. In this way, the equivalence classes of $\mathbb{W}_b$ are naturally adapted to roles in extreme value analysis.

	\subsection{Asymptotic Dependence Measures} \label{sec:2.2}

	We show how a definition of robust covariance can be combined with the $\alpha$-scale to characterize dependence among pairs of random variables in the space $\mathbb{W}_b$. Since the non-trivial members of our vector space are regularly-varying and tail equivalent, the dependence measure can be viewed as a measure of (asymptotic) dependence among regularly-varying random variables, but importantly, no assumption of multivariate regular variation will be required. Central to our construction is the notion of a scale functional
		
	\begin{definition} \label{def:scalefunctional}
		A scale functional $S(\cdot)$ is a function mapping a set of distribution functions into the nonnegative reals with the property: for $a,b\in \mathbb{R}$ and distribution function $F_X$ of a random variable $X$, we have
		\[
			S(F_{aX+b}) = |a|S(F_X).
		\]
	\end{definition}
	\begin{remark}
		For ease of notation, we will write $S(F_X) = S(X)$, where $X$ has distribution function $F_X$. In the same way, $S(X+Y)$ is shorthand for $S(F_{X+Y})$.
	\end{remark}
	\begin{remark}
		As suggested by the notation in our previous remark, scale functionals can be interpreted as a map from a set of random variables into the nonnegative reals. The map then depends only on the distribution of a random variable. This fact will be used below. 
	\end{remark}

	One approach to robust covariance modeling begins with the following well-known form for the covariance between two random variables $X,Y$: for nonzero $a,b$
	\[
		Cov(X,Y) = \frac{1}{4 a b} \bigg[var(aX+bY) - var(aX-bY) \bigg]. \label{eq:2.2.1} \tag{2.2.1}
	\]
    \cite{HR09} (section 8.2) construct a robust covariance by replacing variance in \ref{eq:2.2.1} with $S^2(\cdot)$, where $S$ is a robust scale functional. In practice, the constants $a,b$ are taken to be robust scale estimates. A re-scaling of \ref{eq:2.2.1} such that the value is restricted to $[-1,1]$ yields a robust correlation coefficient. We follow a similar approach, substituting an asymptotic measure of scale arising from the $\alpha$-scale of elements of $\mathbb{W}_b$ in place of a robust measure of scale.
    
    \begin{definition}
    	Define the function $S:\mathbb{V}_b \rightarrow [0,\infty)$ by
    	\[
    	S(X) := \bigg\{\lim_{s \rightarrow \infty} b(s)P(|X|>s)\bigg\}^{\frac{1}{\alpha}} = \kappa_X^{\frac{1}{\alpha}} \label{eq:2.2.2} \tag{2.2.2}
    	\]
    \end{definition}
    \begin{remark}
		    In Section \ref{sec:3} when we seek to define a norm for $\mathbb{V}_b$, this function will serve as the norm, but will require restrictions on $\alpha$. Here we require no such restriction, and in what is to follow we work with general $\alpha>0$.
	\end{remark}
    \begin{remark}
    	There is slight abuse of notation here, as we have used the same notation $S$ for both the specific scale functional above, and a general scale functional in definition \ref{def:scalefunctional}. However, as the specific scale functional define here is the only one of interest in what is to follow, there should be no confusion. 
    \end{remark}

	Below we will show that $S$ is a scale functional on $\mathbb{V}_b$. We will also show that it can be thought of as a functional on $\mathbb{W}_b$, as it evaluates to the same value for each representative of a given equivalence class in $\mathbb{W}_b$. We begin with a technical lemma.
	
	\begin{lemma} \label{lem:1}
		Let $X\in \mathbb{V}_b$ satisfy $b(s)P(|X|>sx)\rightarrow \kappa_X x^{-\alpha}$ for normalizing function $b$ and each $x>0$. Let $c_{x}(s)$ be an eventually positive function satisfying $c_{x}(s)\rightarrow x$. Then 
		\[
			b(s)P(|X|>s c_{x}(s)) \rightarrow \kappa_X x^{-\alpha}
		\]
	\end{lemma}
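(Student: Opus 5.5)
The plan is a sandwich argument that exploits the monotonicity of the survival function of $|X|$ together with the continuity of the limit $x\mapsto \kappa_X x^{-\alpha}$.

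Fix $x>0$ and $\epsilon\in(0,x)$. Since $c_x(s)\rightarrow x$, there is an $s_0$ such that for all $s\geq s_0$ we have $x-\epsilon < c_x(s) < x+\epsilon$; in particular $c_x(s)>0$ there. The map $t\mapsto P(|X|>st)$ is nonincreasing in $t$, so for $s\geq s_0$
\[
b(s)P(|X|>s(x+\epsilon)) \leq b(s)P(|X|>s c_x(s)) \leq b(s)P(|X|>s(x-\epsilon)).
\]

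Take $\liminf$ and $\limsup$ as $s\rightarrow\infty$. The hypothesis $b(s)P(|X|>sy)\rightarrow \kappa_X y^{-\alpha}$ holds for every fixed $y>0$, and we apply it at $y=x\pm\epsilon$. This gives
\[
\kappa_X (x+\epsilon)^{-\alpha} \leq \liminf_{s\rightarrow\infty} b(s)P(|X|>s c_x(s)) \leq \limsup_{s\rightarrow\infty} b(s)P(|X|>s c_x(s)) \leq \kappa_X (x-\epsilon)^{-\alpha}.
\]

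Now let $\epsilon\downarrow 0$. Because $y\mapsto y^{-\alpha}$ is continuous at $y=x>0$, both outer bounds converge to $\kappa_X x^{-\alpha}$. Hence the $\liminf$ and $\limsup$ coincide, and the limit equals $\kappa_X x^{-\alpha}$.

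There is no real obstacle here. The only points needing care are that the hypothesis is used only at fixed arguments $x\pm\epsilon$, which are positive because $\epsilon<x$, and that the case $\kappa_X=0$ is also covered by the same sandwich. No uniform convergence result is required, since monotonicity replaces it.
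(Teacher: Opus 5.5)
Your proposal is correct and follows the paper's proof: the same monotonicity sandwich between $x\pm\epsilon$, followed by letting $\epsilon\downarrow 0$. Your version is slightly tidier, since you work with $|X|$ directly (exactly the hypothesis given, whereas the paper splits into right and left tails using $\kappa_\pm$) and you use $\liminf$ and $\limsup$ rather than presupposing that the limit exists.
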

    \begin{proof}
    	We focus on the right tail, as the proof is analogous for the left tail. Let $\epsilon>0$, then for some large value $S$, we have $x-\epsilon\leq x(s) \leq x+\epsilon$ whenever $s>S$. We then have 
    	\[
    		P(X>s(x+\epsilon)) \leq P(X>s c_{x}(s)) \leq P(X> s(x-\epsilon)).
    	\]
    	Upon multiplying by $b(s)$ and talking $s\rightarrow \infty$, we have
    	\[
    		\kappa_{+}(x+\epsilon)^{-\alpha} \leq \lim b(s)P(X>s c_{x}(s)) \leq \kappa_{+}(x-\epsilon)^{-\alpha}.
    	\]
    	As $\epsilon>0$ is arbitrary, we have $\lim b(s)P(X>s c_{x}(s))= \kappa_{+} x^{-\alpha}$. A similar argument can be applied to the left tail. Combining these two arguments proves the lemma. 
    \end{proof}

    \begin{proposition}
    	For $\alpha>0$, the function $S(X)$ in (\ref{eq:2.2.2})  is a scale functional on $\mathbb{V}_b$.
    \end{proposition}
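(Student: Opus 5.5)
The plan is to verify the defining identity of Definition \ref{def:scalefunctional} directly: for $a,c\in\mathbb{R}$ (I write $c$ for the shift to avoid clashing with the normalizing function $b$) and $X\in\mathbb{V}_b$, show $S(aX+c)=|a|S(X)$. First I must make sure $S$ is well defined on $\mathbb{V}_b$. By Theorem \ref{thm:1} and its proof, every $X\in\mathbb{V}_b$ is either trivially normalized, in which case $b(s)P(|X|>sx)\to 0$ for all $x>0$ and $S(X)=0$, or satisfies $b(s)P(|X|>sx)\to\kappa_X x^{-\alpha}$ with $\kappa_X>0$. In both cases $b(s)P(|X|>sx)\to\kappa_X x^{-\alpha}$ for some $\kappa_X\ge 0$, so the limit in (\ref{eq:2.2.2}) exists and equals $\kappa_X$, and Lemma \ref{lem:1} applies. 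In the trivial case the hypothesis $\kappa_X>0$ plays no role in the lemma's sandwich argument. Also $aX+c\in\mathbb{V}_b$: this follows from closure under scalar multiplication and addition, since constants lie in ${\cal N}_b$.

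Next I treat scaling. If $a=0$, then $aX+c=c$ is bounded, so $b(s)P(|c|>s)=0$ for large $s$ and $S(c)=0=|a|S(X)$. If $a\neq 0$, then $P(|aX|>s)=P(|X|>s/|a|)$. Multiplying by $b(s)$ and using the limit at $x=1/|a|$ gives $\kappa_X|a|^{\alpha}$. Taking the $\alpha$-th root gives $S(aX)=|a|S(X)$.

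The main step is handling the shift $c$, and this is where Lemma \ref{lem:1} is used. The key inequalities are
\[
P(|aX|>s+|c|)\le P(|aX+c|>s)\le P(|aX|>s-|c|).
\]
Write $s\pm|c| = s\,c_{\pm}(s)$ with $c_{\pm}(s)=1\pm|c|/s\to 1$. Apply Lemma \ref{lem:1} to $Y=aX$, whose tail constant is $\kappa_Y=|a|^\alpha\kappa_X$ by the previous step. Then both bounds, multiplied by $b(s)$, converge to $\kappa_Y$, so $b(s)P(|aX+c|>s)\to |a|^\alpha\kappa_X$. Taking $\alpha$-th roots finishes the proof.

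The only delicate point is making sure the lemma is legitimately available in the trivially normalized case. If needed, I would handle that case separately, using the upper bound together with monotonicity: $b(s)P(|Y|>s/2)\to 0$ dominates for large $s$.
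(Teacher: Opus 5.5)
Your proof is correct and follows essentially the same route as the paper. Homogeneity comes from direct computation with $b(s)P(|X|>sx)\to\kappa_X x^{-\alpha}$, and the shift is absorbed into a perturbed threshold $s\,c(s)$ with $c(s)\to 1$ via Lemma~\ref{lem:1}. The paper proves $S(X+t)=S(X)$ first and then writes $aX+t=a(X+t/a)$, whereas you sandwich $|aX+c|$ directly between thresholds $s\pm|c|$ for $|aX|$, which is a slightly tidier packaging.

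You also make explicit two points the paper leaves implicit: the existence of the limit and the case $\kappa_X=0$. Finally, you do not actually need general closure under addition (the step flagged in the authors' disclaimer) to get $aX+c\in\mathbb{V}_b$. The same sandwich, applied to each one-sided tail, gives it directly.
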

    \begin{proof}
    	First we show $S(aX)=|a|S(X)$. Let $X\in \mathbb{V}_b$ have $\alpha$-scale $\kappa_X$, which could be zero, and let $a\in \mathbb{R}\setminus \{0\}$ (the case $a=0$ is trivial). We then have:
    	\[
    	S(aX) = \{\lim_{s\rightarrow \infty} b(s)P(|aX|>s)\}^\frac{1}{\alpha} 
    	= (\kappa_X |a|^{\alpha})^{\frac{1}{\alpha}} 
    	= |a| S(X).
    	\] 
    	Next, we show $S(X+b)=S(X)$. Write
    	\begin{align*}
    		b(s)P(|X+b|>s) &= b(s) \bigg[ P(X>sx-b)+P(X<-sx-b) \bigg] \\
    		&= b(s) \bigg[P\bigg(X>s \bigg(x-\frac{t}{s}\bigg)\bigg)+
    		P\bigg(X>s \bigg(x+\frac{t}{s}\bigg)\bigg) \bigg]
    	\end{align*}
    	Taking limits and applying the arguments in lemma \ref{lem:1} gives $S(X+b)=S(X)$.
		Finally, combining the above two properties, for $a\neq 0$ we have
    	\[
    	S(aX+b) = S(a(X+\frac{b}{a}))=|a|S(X+\frac{a}{b})=|a|S(X).
    	\]  	
    	For $a=0$ the result is trivial.
    \end{proof}
    
    The scale functional $S$ can also be considered as a mapping $S:\mathbb{W}_b \rightarrow [0,\infty)$. By construction, $S$ depends only on the asymptotic tail of a random variable in $\mathbb{V}_b$. It therefore evaluates to the same value for each representative of an equivalence class in $\mathbb{W}_b$. This leads us to form an asymptotic covariance measure by mimicking the formulation of discussed above.
   
    \begin{definition}
    	For $X,Y \in \mathbb{W}_b$, let $a,b$ be arbitrary and define the extremal covariance
    	\[
    	EC(X,Y) = \frac{1}{4 a b} \bigg[S^2(aX+bY) - S^2(aX-bY) \bigg]. \label{eq:2.2.3} \tag{2.2.3}
    	\]
    \end{definition}

    The measure $EC$ is a very general measure of asymptotic dependence, and enjoys considerable flexibility through selecting specific values for $a$ and $b$. However, some general properties are of note. Firstly, if $X$ and $Y$ are asymptotically independent, then $EC(X,Y)$ is equal to $0$. Though the converse is not necessarily true, this is analogous to the case of classical covariance. Additionally, like a classical covariance, $EC(X,Y)$ depends not only on the strength and direction of asymptotic dependence between $X$ and $Y$, but also their scales. In practice, this latter property frustrates interpretation of $EC(X,Y)$. We solve this by defining an more interpretable extremal correlation. 

 	\begin{definition}
    For $X,Y \in \mathbb{W}_b$, let $a=S(X)^{-1},b=S(Y)^{-1}$ define the extremal correlation
 	    \[
    	ER(X,Y) = \frac{S^2(aX+bY) - S^2(aX-bY)}{S^2(aX+bY) + S^2(aX-bY)}. \label{eq:2.2.4} \tag{2.2.4}
    	\]
 	\end{definition}
    
    The restriction of $a,b$ in the definition of the extremal correlation $ER$ ensures that it takes values in $[-1,1]$ and will thus be more interpretable. In particular, if $X$ and $Y$ are multiples of the same equivalence class, $X=cY$ for some $c\neq 0$, we have the desirable property that $ER(X,Y) \in \{-1,1\}$ depending on the sign of $c$. It also inherits from $EC$ the property that if $X$ and $Y$ are asymptotically independent, then $ER(X,Y)$ is equal to $0$. See \ref{app:A1} for details.

	\section{A Normed Vector Space for Regular Variation} \label{sec:3}
	
	\subsection{A Natural Norm for $\mathbb{W}_b$} \label{sec:3.1}
	
	We now endow the space $\mathbb{W}_b$ with additional structure, in particular with a norm. To do this, we instead endow the base space $\mathbb{V}_b$ with a \textit{seminorm}, $||\cdot||_b$. We then use this seminorm to define the same null space as we defined above, ${\cal N}_b = \{X\in \mathbb{V}_b:  ||X||_b = 0\}$. Then, just as above, we take a quotient to get a vector space $\mathbb{W}_b$. Using this method of construction, we obtain the same space $\mathbb{W}_b$ as above, for the space ${\cal N}_b$ is the same; however, as the subspace ${\cal N}_b$ is now defined using a seminorm, this gives rise to a natural norm on the quotient space $\mathbb{W}_b$.
	
	The natural candidate for a seminorm is given by the scale functional (\ref{eq:2.2.2}):
	\[
	||X||_b = S(X) = \bigg\{\lim_{s \rightarrow \infty} b(s)P(|X|>s)\bigg\}^{\frac{1}{\alpha}} = \kappa_X^{\frac{1}{\alpha}}.
	\]
	Showing that $||\cdot||_b$ is a seminorm is not an easy task. While non-negativity and absolute homogeneity of $||\cdot||_b$ are easily shown, showing the triangle inequality is satisfied is not simple. After some technical lemmas, we show in theorem \ref{thm:2} below that if $\alpha>1$, the triangle inequality is satisfied and that $||\cdot||_b$ is a seminorm. 
	
	Before we present some technical lemmas, we recall some basic definitions and outline the direction of our proof. First, recall that two random variables $Y_1,Y_2$ are comonotone if one is a monotone function of the other. This is typically expressed as $(Y_1,Y_2) = (F_{Y_1}^{-1}(U), F_{Y_2}^{-1}(U))$, where $U$ is a uniform random variable. To begin building some intuition as to how the proof will proceed, restrict attention to positive elements $X_1, X_2 \in \mathbb{V}_b$. In this case, it would seem the stronger the positive dependence between $X_1$ and $X_2$, the greater the $\alpha$-scale of their sum. As comonotonicity represents a very strong form of of positive dependence, one might guess that the sum of comonotone random variables gives an upper bound for the $\alpha$-scale of sums with the same marginals. Our first lemma confirms this intuition. 
	
	The proof of this lemma utilizes convex order, a useful characterization of which we recall here. A random variable $X$ is said to be less than a random variable $Y$ in convex order provided that $E[X] = E[Y]$ and 
	\[
		\int_x^\infty \bar{F}_X(u) du \leq \int_x^\infty \bar{F}_Y(u)du \quad \forall x.
	\]
	Note this requires the tail index $\alpha$ be greater 1 for regularly varying $X,Y$. For further details, see \cite{SS07}. In addition to providing an upper bound on $\alpha$-scales, comonotonicity has an additional use. Under comonotonicity, quantile functions of sums become additive, and so in the limit, $\alpha$-scales become additive as well \cite{KDVGD02}. This additivity property, coupled with the upper bound on $\alpha$-scales, afforded by comonotonicity is ultimately the simplifying device that makes a proof of the triangle inequality manageable.
	
	\begin{lemma} \label{lem:upperbound}
		Let $X_1, X_2 \geq 0$ be regularly varying with normalizing function $b(s) = s^\alpha L_b(s)$ with $\alpha >1$. Denote their marginals by $F_{X_1}$ and $F_{X_2}$. Furthermore, let $(Y_1, Y_2)$ be comonotonic with the same marginals as $(X_1, X_2)$. Then, $\kappa_{X_1+X_2}^{\frac{1}{\alpha}} \leq \kappa_{Y_1+Y_2}^{\frac{1}{\alpha}}$.  
	\end{lemma}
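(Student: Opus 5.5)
The plan is to use the convex order characterization recalled just above, together with the classical fact that the comonotone sum dominates every sum with the same marginals in convex order. Concretely, for $X_1, X_2 \geq 0$ with $\alpha > 1$, both have finite means, so $X_1 + X_2 \leq_{cx} Y_1 + Y_2$ (see \cite{SS07}). By the integral characterization of convex order, this gives
\[
\int_x^\infty \bar F_{X_1+X_2}(u)\,du \;\leq\; \int_x^\infty \bar F_{Y_1+Y_2}(u)\,du \quad \text{for all } x .
\]
The task is then to transfer this inequality between integrated tails into an inequality between $\alpha$-scales.

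The first step is to check that both sums are regularly varying with normalizing function $b$ and to compute, or at least bound, their $\alpha$-scales.
\begin{itemize}
\item For the comonotone sum, quantiles are additive: $F^{-1}_{Y_1+Y_2} = F^{-1}_{X_1} + F^{-1}_{X_2}$. Inverting the regular variation of each marginal, the quantile functions behave like $\kappa_{X_i}^{1/\alpha}$ times a common regularly varying function of index $1/\alpha$. Hence
\[
\kappa_{Y_1+Y_2}^{1/\alpha} = \kappa_{X_1}^{1/\alpha} + \kappa_{X_2}^{1/\alpha},
\]
as in \cite{KDVGD02}.
\item For $X_1 + X_2$, we know $X_1 + X_2 \in \mathbb{V}_b$. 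By Theorem \ref{thm:1}, it is either trivially normalized, in which case the inequality is immediate since $\kappa_{X_1+X_2} = 0$, or regularly varying with some scale $\kappa_{X_1+X_2}$.
\end{itemize}
I will assume the relevant limit exists, as the paper's framework implicitly does; the disclaimer suggests this is delicate.

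The second step is to apply Karamata's theorem, which requires $\alpha > 1$. If $b(s)\bar F_Z(sx) \to \kappa_Z x^{-\alpha}$, then
\[
\frac{b(s)}{s} \int_{s}^\infty \bar F_Z(u)\,du \;\to\; \frac{\kappa_Z}{\alpha - 1}.
\]
This follows from the substitution $u = sv$ together with dominated convergence, justified by Potter bounds. Multiply the convex order inequality at $x = s$ by $b(s)/s$ and let $s \to \infty$. The result is $\kappa_{X_1+X_2}/(\alpha-1) \leq \kappa_{Y_1+Y_2}/(\alpha-1)$, hence $\kappa_{X_1+X_2} \leq \kappa_{Y_1+Y_2}$, and taking $1/\alpha$ powers finishes the proof. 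If only $\limsup$ or $\liminf$ are available, the same argument still bounds the upper limit of $b(s)\bar F_{X_1+X_2}(s)$. In that case I would replace exact Karamata by a one-sided version: a Fatou-type lower bound on the left side and a Karamata limit on the comonotone side.

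I expect the main obstacle to be the step from integrated tails back to pointwise tails. Convex order controls only the integrals $\int_s^\infty \bar F$, not $\bar F(s)$ itself. So the argument genuinely needs $X_1 + X_2$ to have a regularly varying tail, so that its integrated tail determines $\kappa_{X_1+X_2}$ through Karamata. This holds if one accepts that sums of elements of $\mathbb{V}_b$ have limiting tails, i.e.\ closure of $\mathbb{V}_b$ together with Theorem \ref{thm:1}. If that is in doubt, I would first try to establish it from the lower bound: a monotonicity argument on $\bar F_{X_1+X_2}$ comparing $\int_s^{s(1+\delta)}\bar F$ with $\delta s\,\bar F(s(1+\delta))$, then sending $\delta \to 0$.
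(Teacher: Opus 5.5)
Your main argument is the paper's proof: the comonotone sum dominates in convex order, which gives $\int_s^\infty \bar F_{X_1+X_2}\le \int_s^\infty \bar F_{Y_1+Y_2}$; multiplying by $b(s)/s$ and applying Karamata then yields $\kappa_{X_1+X_2}\le\kappa_{Y_1+Y_2}$, with the existence of $\kappa_{X_1+X_2}$ taken as implicit exactly as the paper does (your Potter-bound justification and your separate handling of the trivially normalized case are extra care the paper omits). One correction to your fallback: Fatou applied to $b(s)\int_1^\infty \bar F_{X_1+X_2}(sv)\,dv$ bounds only $\liminf_s b(s)\bar F_{X_1+X_2}(s)$, not the $\limsup$, and the $\limsup$ really can exceed $\kappa_{Y_1+Y_2}$ (take identical Pareto$(\alpha)$ marginals with $b(s)=s^\alpha$, coupled counter-monotonically on sparse bands of the upper tail: $s^\alpha P(X_1+X_2>s)$ reaches nearly $2^{\alpha+1}$ along a subsequence while $\kappa_{Y_1+Y_2}=2^\alpha$), so neither Fatou nor your monotonicity comparison can establish that the limit exists---it must remain a hypothesis, which is exactly the closure issue flagged in the paper's disclaimer.
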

	\begin{proof}
		As we assumed $\alpha >1$, expectations exist, allowing the use of convex ordering. We denote the convex order by $\leq_{cx}$. It is well known that for $(X_1, X_2)$ and $(Y_1,Y_2)$ with $Y_i =_d X_i$ and $(Y_1,Y_2)$ comonotone, we have the relation
		\[
		X:= X_1+X_2 \leq_{cx} Y_1+Y_2 =: Y.
		\]
		That is, the comonotonic dependence structure gives the largest convex sum (see \cite{KDVGD02}). Now, denote the marginal of $X=X_1+X_2$ by $F_X$ and similarly for $Y=Y_1+Y_2$. We then have 
		\[
		\int_s^\infty \bar{F}_X(t)dt \leq \int_s^\infty \bar{F}_Y(t)dt  \quad{\forall s}. \label{eq:3.1.1} \tag{3.1.1}
		\]
		As $\alpha>1$, both integrals exist. Since both $\bar{F}_X$ and $\bar{F}_Y$ are regularly varying, the asymptotics for these integrals are well understood. Indeed, it is known that as $x\rightarrow \infty$ 
		\[
		\int_x^\infty t^{-\alpha} L(t)dt \sim \frac{x^{-\alpha+1}L(x)}{\alpha-1}.
		\]
		
		Hence, rather than $b(s)$, we ought to scale by $\frac{b(s)}{s}$ in \eqref{eq:3.1.1} above. Doing so and taking limits gives
		
		\[
		\lim_{s\rightarrow \infty}\frac{b(s)}{s}\int_s^\infty \bar{F}_X(t)dt \leq \lim_{s\rightarrow \infty}\frac{b(s)}{s}\int_s^\infty \bar{F}_Y(t)dt,
		\]
		
		\[
		\lim_{s\rightarrow \infty} \bigg\{\frac{s^{\alpha}L_b(s)}{s} \frac{s^{-\alpha +1}L_X(s)}{\alpha-1} \bigg\} \leq \lim_{s\rightarrow \infty} \bigg\{\frac{s^{\alpha}L_b(s)}{s} \frac{s^{-\alpha +1}L_Y(s)}{\alpha-1} \bigg\},
		\]
		
		\[
		\lim_{s\rightarrow \infty} L_b(s)L_X(s) \leq \lim_{s\rightarrow \infty} L_b(s)L_Y(s),
		\]
		
		\[
		\kappa_X \leq \kappa_Y.
		\]
		
		Now, recalling that $X=X_1+X_2$ and $Y=Y_1+Y_2$, taking $\frac{1}{\alpha}$ roots gives
		
		\[
		\kappa_{X_1+X_2}^{\frac{1}{\alpha}} \leq \kappa_{Y_1+Y_2}^{\frac{1}{\alpha}}
		\]
		as desired.
	\end{proof}

	Lemma \ref{lem:upperbound} simplifies the analyses of the triangle inequality for $||\cdot||_b$, as it reduces the analysis to comonotone random variables. As the $\alpha$-scales of elements of $\mathbb{V}_b$ depend only on the marginal distribution, if the triangle inequality holds for comonotone elements, lemma \ref{lem:upperbound} shows that the triangle inequality will hold in general.
	
	To show the triangle inequality for comonotone random variables, we make use of the additivity of the quantile function of sums under comonotonicity. This requires a way to extract the $\alpha$-scale of a random variable $X$ with c.d.f. $F_X (x)$ from its quantile function $Q_X(p)$. Equivalently, a more natural approach is to work with tail quantile function: $U_X(t):=Q_X(1-\frac1t)$ for $t\geq 1$. Additivity of the quantile function is easily seen to translate to the tail quantile function. The reason for the jump to tail quantile functions are the more natural asymptotics enjoyed by the tail quantile function. As the tail quantile function is the asymptotic inverse of $(1-F(t))^{-1}$, it has the well known asymptotic representation
	\[
	U_X(t) \sim t^{\frac{1}{\alpha}} (L_X^{-\frac{1}{\alpha}})^*(t^{\frac{1}{\alpha}}) \quad(t\rightarrow \infty).
	\] 
	Here $*$ denotes de Brujin conjugacy (cf \cite{BGST05}).
	
	The next few results establish the proper modification to our normalizing function $b$ in order to normalize the tail quantile function and recover $\alpha$-scale information.
	
	\begin{lemma} \label{lem:asconj}
		Let $X \geq 0 $ be a regularly varying random variable with survival function $\bar{F} = x^{-\alpha}L_X(x)$ and scale $\kappa_X$ when normalized by $b(s)=s^\alpha L_b(s)$. Then 
		\[
		(L_X^{\frac{-1}{\alpha}})^{*}(t^{\frac{1}{\alpha}}) \sim \kappa_X^{\frac{1}{\alpha}} (L_b^{\frac{1}{\alpha}})^{*}(t^{\frac{1}{\alpha}}) (t\rightarrow \infty),
		\] 
		where $*$ denotes de Brujin conjugacy. 
	\end{lemma}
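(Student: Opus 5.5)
The plan is to reduce the statement to a relation between slowly varying functions and then use that de Bruijn conjugacy respects asymptotic equivalence and constant multiples.

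First, unpack what $X$ having scale $\kappa_X$ under $b$ means. Since $b(s)\bar F(s) = s^\alpha L_b(s) s^{-\alpha}L_X(s) = L_b(s)L_X(s)$, the hypothesis (taking $x=1$) gives
\[
L_X(s) \sim \kappa_X L_b(s)^{-1} \quad (s\to\infty),
\]
and consequently
\[
L_X^{-1/\alpha}(s) \sim \kappa_X^{-1/\alpha} L_b^{1/\alpha}(s).
\]
So we must show that the de Bruijn conjugate of $\kappa_X^{-1/\alpha} L_b^{1/\alpha}$, up to asymptotic equivalence, is $\kappa_X^{1/\alpha}(L_b^{1/\alpha})^*$, and then substitute $t^{1/\alpha}$ for the argument.

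Next, recall from \cite{BGST05} the two facts I will use. The conjugate $\ell^*$ of a slowly varying $\ell$ is characterized, uniquely up to asymptotic equivalence, by
\[
\ell(x)\ell^*(x\ell(x))\to 1 \quad\text{and}\quad \ell^*(x)\ell(x\ell^*(x))\to 1.
\]
Also, if $\ell_1\sim\ell_2$ then $\ell_1^*\sim\ell_2^*$. The first fact lets me compute conjugates of constant multiples. Let $\ell = L_b^{1/\alpha}$ and $m = c\,\ell$ with $c=\kappa_X^{-1/\alpha}$. I claim $m^* \sim c^{-1}\ell^*$. To verify this, compute
\[
m(x)\, c^{-1}\ell^*(x\,m(x)) = \ell(x)\,\ell^*(c\,x\ell(x)).
\]
Because $\ell^*$ is slowly varying, $\ell^*(c\,x\ell(x)) \sim \ell^*(x\ell(x))$; the uniform convergence theorem justifies this, as the multiplier $c$ is fixed. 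Hence the product tends to $1$, and uniqueness gives $m^*\sim c^{-1}\ell^*$. Combining with the second fact, applied to $L_X^{-1/\alpha}\sim m$, yields
\[
(L_X^{-1/\alpha})^* \sim \kappa_X^{1/\alpha}(L_b^{1/\alpha})^*.
\]
Evaluating at $t^{1/\alpha}\to\infty$ preserves the equivalence.

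The main obstacle is justifying that conjugacy respects asymptotic equivalence, which is a standard theorem (BGT Prop.\ 1.5.15 / Thm 1.5.13). I would cite it rather than reprove it. If a self-contained argument is wanted, I would show directly that $\ell_1^*(x)\ell_2(x\ell_1^*(x)) \sim \ell_1^*(x)\ell_1(x\ell_1^*(x))\to1$, which holds since $x\ell_1^*(x)\to\infty$, and then invoke uniqueness. A minor point needing care is that $\kappa_X>0$, so that $c$ is finite and nonzero; this holds since $X$ is regularly varying by Theorem \ref{thm:1}.
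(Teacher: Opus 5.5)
Your proposal is correct and follows essentially the same route as the paper: from $b(s)\bar F(s)=L_b(s)L_X(s)\to\kappa_X$ you get $L_X^{-1/\alpha}\sim\kappa_X^{-1/\alpha}L_b^{1/\alpha}$, and then you use the fact that de Bruijn conjugation respects asymptotic equivalence. Your one addition is the explicit check, via the defining relation $\ell(x)\ell^*(x\ell(x))\to 1$, that $(c\ell)^*\sim c^{-1}\ell^*$, which the paper uses without comment; as a small aside, $\kappa_X>0$ comes directly from the hypothesis that $X$ is regularly varying under $b$, not from Theorem \ref{thm:1}.
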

	\begin{proof}
		It is well know that if $L(x)\sim U(x)$ as $x\rightarrow \infty$ with $L$ and $U$ slowly varying at infinity, then we have the conjugacy relation $L^*(x) \sim U^*(x)$ as $x\rightarrow \infty$. 
		
		Applying this to the functions $L_X$ and $L_b$ above, we have
		\begin{align*}
			\kappa_x^{-1} L_b(s) L_X(s) &\sim 1 \quad(s\rightarrow \infty), \\
			\kappa_x^{-\frac{1}{\alpha}} L_b(s)^{\frac{1}{\alpha}} L_X(s)^{\frac{1}{\alpha}} &\sim 1 \quad(s\rightarrow \infty), \\		
			\kappa_x^{-\frac{1}{\alpha}} L_b(s)^{\frac{1}{\alpha}} &\sim L_X(s)^{-\frac{1}{\alpha}} \quad(s\rightarrow \infty). \\			
		\end{align*}
		Thus, utilizing the conjugacy relation above, we have 
		$$
		\big(L_X^{-\frac{1}{\alpha}}\big)^{*}(s) \sim \bigg(\kappa_x^{-\frac{1}{\alpha}} L_b(s)^{\frac{1}{\alpha}}\bigg)^{*} \sim \kappa_x^{\frac{1}{\alpha}} \big(L_b^{\frac{1}{\alpha}}\big)^{*}(s) \quad(s\rightarrow \infty). 
		$$
		
		Now, setting $s = t^{\frac{1}{\alpha}}$ gives
		$$
		\big(L_X^{-\frac{1}{\alpha}}\big)^{*}(t^{\frac{1}{\alpha}}) \sim \kappa_x^{\frac{1}{\alpha}} \big(L_b^{\frac{1}{\alpha}}\big)^{*}(t^{\frac{1}{\alpha}}) \quad(t\rightarrow \infty), 
		$$
		as desired.
	\end{proof}
	
	The following proposition uses lemma \ref{lem:asconj} to demonstrate how $\alpha$-scale information can be recovered from $U_X(t)$.
	
	\begin{proposition} 
		Suppose $X\geq 0$ is a regularly varying random variable with survival function $\bar{F} = x^{-\alpha} L_X(x)$, and scale $\kappa_X$ when normalized by $b(s)=s^\alpha L_b(s)$. Furthermore, let $Q_X(p)$ be the quantile function of $X$. Define the tail quantile function $U_X(t) = Q_X(1-\frac1t)$ for $t\geq1$. Then 
		\[
		b^{*}(t) U_X(t) \sim \kappa^{\frac{1}{\alpha}},
		\]
		where $b^{*}(t) = t^{-\frac{1}{\alpha}} \bigg[(L_b^{\frac{1}{\alpha}})^{*}(t^{\frac{1}{\alpha}}) \bigg]^{-1}$.
	\end{proposition}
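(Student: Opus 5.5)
The plan is to combine the asymptotic representation of the tail quantile function recalled above,
\[
U_X(t) \sim t^{\frac{1}{\alpha}} \big(L_X^{-\frac{1}{\alpha}}\big)^{*}\big(t^{\frac{1}{\alpha}}\big) \quad (t\rightarrow \infty),
\]
with Lemma \ref{lem:asconj}, and then simply multiply by $b^*(t)$.

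First, I will justify the representation of $U_X$. Since $\bar F(x) = x^{-\alpha}L_X(x)$, we have $1/\bar F(x) = \big(x\, L_X(x)^{-1/\alpha}\big)^{\alpha}$. The function $x L_X(x)^{-1/\alpha}$ is $1$-regularly varying, so by de Bruijn theory (cf. \cite{BGST05}) its asymptotic inverse is $y\,(L_X^{-1/\alpha})^*(y)$. Since $U_X$ is the asymptotic inverse of $1/\bar F$, setting $y = t^{1/\alpha}$ gives the display above. I will take this as the standard fact the paper already cites, noting only that $U_X$ is a generalized inverse. Agreement with the asymptotic inverse up to $\sim$ follows from regular variation of $\bar F$, which holds by Theorem \ref{thm:1}.

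Second, I apply Lemma \ref{lem:asconj} to get
\[
\big(L_X^{-\frac{1}{\alpha}}\big)^{*}\big(t^{\frac{1}{\alpha}}\big) \sim \kappa_X^{\frac{1}{\alpha}} \big(L_b^{\frac{1}{\alpha}}\big)^{*}\big(t^{\frac{1}{\alpha}}\big).
\]
Substituting this into the first display yields $U_X(t) \sim \kappa_X^{1/\alpha}\, t^{1/\alpha} (L_b^{1/\alpha})^*(t^{1/\alpha})$. Multiplying by $b^*(t) = t^{-1/\alpha}\big[(L_b^{1/\alpha})^*(t^{1/\alpha})\big]^{-1}$ cancels everything except $\kappa_X^{1/\alpha}$. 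Hence $b^*(t)U_X(t) \rightarrow \kappa_X^{1/\alpha}$, which is the statement.

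The main obstacle is the bookkeeping of conjugates in Lemma \ref{lem:asconj} and in the representation of $U_X$. Specifically, one must check that the conjugate of a constant multiple satisfies $(cL)^* \sim c^{-1}L^*$, and that the exponents $\pm 1/\alpha$ line up. This includes verifying that $(L_b^{1/\alpha})^*$ is the correct conjugate, as opposed to $(L_b^{-1/\alpha})^*$. If the signs do not match, I would recheck directly by inverting $s \mapsto b(s)/\kappa_X$. This works because $1/\bar F(s) \sim b(s)/\kappa_X$, and asymptotic inverses respect $\sim$ for regularly varying functions of positive index. So $U_X(t) \sim b^{\leftarrow}(\kappa_X t)$, and $b^{\leftarrow}(\kappa_X t) \sim \kappa_X^{1/\alpha} b^{\leftarrow}(t)$ by $1/\alpha$-regular variation of $b^{\leftarrow}$. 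It would then remain to identify $1/b^*$ with $b^{\leftarrow}$ up to asymptotic equivalence, which is the same de Bruijn computation applied to $b$ alone.

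If the case $\kappa_X = 0$ is to be allowed, it is excluded by the hypothesis that $X$ is regularly varying, so it needs no treatment.
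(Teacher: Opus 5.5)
Your proposal is correct and is essentially the paper's proof: substitute the de Bruijn representation $U_X(t)\sim t^{1/\alpha}(L_X^{-1/\alpha})^{*}(t^{1/\alpha})$, apply Lemma~\ref{lem:asconj}, and cancel against $b^{*}(t)$. The conjugate bookkeeping you flagged does line up, since $(c\ell)^{*}\sim c^{-1}\ell^{*}$ gives $(\kappa_X^{-1/\alpha}L_b^{1/\alpha})^{*}\sim\kappa_X^{1/\alpha}(L_b^{1/\alpha})^{*}$, and your fallback via $U_X(t)\sim b^{\leftarrow}(\kappa_X t)\sim\kappa_X^{1/\alpha}b^{\leftarrow}(t)$ with $1/b^{*}\sim b^{\leftarrow}$ is a sound independent check (the appeal to Theorem~\ref{thm:1} is unnecessary, since regular variation of $\bar F$ is already a hypothesis).
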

	\begin{proof}
		Observe
		\begin{align*}
	 	 \lim_{t\rightarrow \infty} b^{*}(t) U_X(t) &= \lim_{t\rightarrow \infty} t^{-\frac{1}{\alpha}}  [(L_b^{\frac{1}{\alpha}})^{*}(t^{\frac{1}{\alpha}})]^{-1} \cdot t^{\frac{1}{\alpha}} (L_X^{\frac{-1}{\alpha}})^{*}(t^{\frac{1}{\alpha}}) \\
	 	 &= \lim_{t\rightarrow \infty} \frac{(L_X^{-\frac{1}{\alpha}})^{*}(t^{\frac{1}{\alpha}})} {(L_b^{\frac{1}{\alpha}})^{*}(t^{\frac{1}{\alpha}})} \\
	 	 &= \kappa_{X}^{\frac{1}{\alpha}}.
	 	\end{align*}

		Here we have used the asymptotic expression for $U_X$ in the first equality, and lemma \ref{lem:asconj} in the second.
	\end{proof}

	This final lemma shows that not only does the triangle inequality for $||\cdot||_b$ hold for positive comonotonic elements of $\mathbb{V}_b$, but it is in fact an equality. 

	\begin{lemma} \label{lem:comequal}
	Let $Y_1, Y_2 \geq 0$ be comonotone random variables with regularly varying marginals, with scales $\kappa_{Y_1}, \kappa_{Y_2}$ when normalized by $b(s) = s^{\alpha}L_b(s)$. Then 
	\[
	\kappa_{Y_1+Y_2}^{\frac{1}{\alpha}} = \kappa_{Y_1}^{\frac{1}{\alpha}} +\kappa_{Y_2}^{\frac{1}{\alpha}}.
	\]
	\end{lemma}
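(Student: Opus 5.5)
The plan is to pass to tail quantile functions, where comonotonicity turns into additivity, and then to read off the $\alpha$-scale of $Y_1+Y_2$ with the preceding proposition. Since $Y_1$ and $Y_2$ are comonotone, we may write $(Y_1,Y_2)=(Q_{Y_1}(U),Q_{Y_2}(U))$ with $U$ uniform. The map $u\mapsto Q_{Y_1}(u)+Q_{Y_2}(u)$ is nondecreasing and left continuous. Hence it is the quantile function of $Y_1+Y_2$, so that $Q_{Y_1+Y_2}=Q_{Y_1}+Q_{Y_2}$ (the standard additivity from \cite{KDVGD02}). Setting $p=1-\frac1t$ gives
\[
U_{Y_1+Y_2}(t)=U_{Y_1}(t)+U_{Y_2}(t), \qquad t\geq 1 .
\]

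Next I establish that $Y_1+Y_2$ is itself regularly varying with the same normalizing function $b$. This is needed so that the proposition applies to the sum. The key fact is that $U_{Y_1+Y_2}$ is a sum of two functions that are regularly varying with index $\frac1\alpha$. Moreover each is asymptotic to a constant multiple of the same regularly varying function $1/b^{*}(t)=t^{\frac1\alpha}(L_b^{\frac1\alpha})^{*}(t^{\frac1\alpha})$, by the proposition. So
\[
b^{*}(t)\,U_{Y_1+Y_2}(t)=b^{*}(t)U_{Y_1}(t)+b^{*}(t)U_{Y_2}(t)\rightarrow \kappa_{Y_1}^{\frac1\alpha}+\kappa_{Y_2}^{\frac1\alpha}.
\]
Thus $U_{Y_1+Y_2}$ is regularly varying with index $\frac1\alpha$. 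Since $U$ is the generalized inverse of $1/\bar F$, inverting gives that $\bar F_{Y_1+Y_2}$ is regularly varying with index $-\alpha$ (Theorem 1.5.12 in \cite{BGST05}).

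The remaining task is to identify the constant. I need to show that $b(s)\bar F_{Y_1+Y_2}(s)\to \kappa$ with $\kappa^{\frac1\alpha}=\kappa_{Y_1}^{\frac1\alpha}+\kappa_{Y_2}^{\frac1\alpha}$. The proposition gives the implication $\kappa$-normalization of $\bar F \Rightarrow$ $\kappa^{1/\alpha}$-normalization of $U$ by $b^{*}$. I need the converse, which I would prove by inverting the asymptotic equivalence directly.

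\textbf{Converse step.} Set $c=\kappa_{Y_1}^{\frac1\alpha}+\kappa_{Y_2}^{\frac1\alpha}$, so that $U_{Y_1+Y_2}(t)\sim c/b^{*}(t)$. Note that $1/b^{*}$ is, up to asymptotic equivalence, the inverse of $b$; this is the content of the de Bruijn conjugate construction. Therefore $U_{Y_1+Y_2}(t)\sim c\,b^{\leftarrow}(t)=b^{\leftarrow}(c^{\alpha}t)$, using the $\frac1\alpha$-regular variation of $b^{\leftarrow}$. Inverting asymptotically equivalent regularly varying functions preserves equivalence, so $1/\bar F_{Y_1+Y_2}(s)\sim c^{-\alpha}b(s)$. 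This is exactly $b(s)\bar F_{Y_1+Y_2}(s)\to c^{\alpha}$.

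Equivalently, the converse step can be obtained by applying Lemma \ref{lem:asconj} in reverse: de Bruijn conjugation is an involution up to $\sim$, so the relation between the conjugates yields $L_{Y_1+Y_2}L_b\to c^\alpha$.

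\textbf{Main obstacle.} I expect the converse step, together with the care needed about generalized inverses and possible atoms or flat pieces of $F$, to be the main obstacle. My plan there is to cite the inversion theorem for regularly varying functions, rather than argue pointwise. Taking $\frac1\alpha$-th roots of $\kappa_{Y_1+Y_2}=c^{\alpha}$ then gives the claimed equality.
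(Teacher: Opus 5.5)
Your proposal is correct and follows the same route as the paper. Comonotone additivity of quantile functions gives $U_{Y_1+Y_2}=U_{Y_1}+U_{Y_2}$, and multiplying by $b^{*}(t)$ and applying the preceding proposition yields the identity; your converse step (inverting $U_{Y_1+Y_2}(t)\sim c/b^{*}(t)$ to get $b(s)\bar{F}_{Y_1+Y_2}(s)\rightarrow c^{\alpha}$) adds something the paper omits, since the paper applies the proposition to $Y_1+Y_2$ without checking that the sum is regularly varying under $b$ or that its $\alpha$-scale exists.
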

	\begin{proof}
		Since $Y_1$ and $Y_2$ are comonotone, the quantile function of the sum, $Q_{Y_1+Y_2}(p)$ is additive. Rewriting $p=1-\frac1t$, we can recover the tail quantile function, giving
		\[
		U_{Y_1+Y_2}(t) = U_{Y_1}(t) + U_{Y_2}(t).
		\]
		
		Now, using the proposition above, multiplying by $b^*(t)$ and letting $t\rightarrow\infty$ we have
		\[
		\kappa_{Y_1+Y_2}^{\frac{1}{\alpha}} = \kappa_{Y_1}^{\frac{1}{\alpha}} + \kappa_{Y_2}^{\frac{1}{\alpha}},
		\]
		as desired.
	\end{proof}
	
	We are now ready to present the main theorem of this section. The proof is a straightforward application of the lemmas above.
	
	\begin{theorem} \label{thm:2}
		Let $X\in \mathbb{V}_b$ and define 
		\[
		||X||_b = \kappa_X^{\frac{1}{\alpha}} = \bigg\{\lim_{s\rightarrow \infty} b(s)P(|X|>s) \bigg\}^{\frac{1}{\alpha}}.
		\]
		Then $||X||_b$ defines a seminorm on $\mathbb{V}_b$ for $\alpha>1$.
	\end{theorem}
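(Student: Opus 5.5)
We verify the three seminorm axioms: non-negativity, absolute homogeneity, and the triangle inequality. Non-negativity is immediate from the definition. Absolute homogeneity $||cX||_b=|c|\,||X||_b$ is exactly the scale-functional property $S(aX)=|a|S(X)$ proved in the proposition following Lemma \ref{lem:1}. We also note that $\kappa_X$ is well defined for every $X\in\mathbb{V}_b$: by Theorem \ref{thm:1}, either $X$ is trivially normalized, so $\kappa_X=0$, or $X$ is regularly varying with $b(s)P(|X|>s)\to\kappa_X$. The real work is the triangle inequality $\kappa_{X_1+X_2}^{1/\alpha}\le\kappa_{X_1}^{1/\alpha}+\kappa_{X_2}^{1/\alpha}$ for $X_1,X_2\in\mathbb{V}_b$.

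\textbf{Reduction to nonnegative variables.} Since $|X_1+X_2|\le |X_1|+|X_2|$ pointwise, we have
\[
P(|X_1+X_2|>s)\le P(|X_1|+|X_2|>s).
\]
Multiplying by $b(s)$ and letting $s\to\infty$ gives $\kappa_{X_1+X_2}\le \kappa_{|X_1|+|X_2|}$. For this limit we use that $|X_1|+|X_2|\in\mathbb{V}_b$ by closure under addition, and that $X_1+X_2\in\mathbb{V}_b$ has a limiting $\alpha$-scale. Since $\kappa_{|X_i|}=\kappa_{X_i}$, it suffices to prove the inequality for nonnegative $X_1,X_2$.

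\textbf{Degenerate cases.} If, say, $X_2\in{\cal N}_b$, then $X_1+X_2\sim_b X_1$, and the $\alpha$-scale is constant on equivalence classes. This follows from a squeeze argument like the one in $\mathbb{V}_b$'s closure proof, combined with the $\epsilon$-splitting of Lemma \ref{lem:1}: bound $P(X_1+X_2>s)\le P(X_1>s(1-\epsilon))+P(X_2>s\epsilon)$, and symmetrically from below. So the inequality holds, with equality. If $X_1+X_2$ itself is trivially normalized, there is nothing to prove.

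\textbf{Main case.} Suppose $X_1,X_2\ge 0$ are both regularly varying with index $\alpha>1$ under $b$. Let $(Y_1,Y_2)$ be a comonotone pair with the same marginals. Lemma \ref{lem:upperbound} gives $\kappa_{X_1+X_2}^{1/\alpha}\le \kappa_{Y_1+Y_2}^{1/\alpha}$. Lemma \ref{lem:comequal} gives $\kappa_{Y_1+Y_2}^{1/\alpha}=\kappa_{Y_1}^{1/\alpha}+\kappa_{Y_2}^{1/\alpha}$. Since $\alpha$-scales depend only on the marginals, $\kappa_{Y_i}=\kappa_{X_i}$, and chaining these yields the triangle inequality.

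\textbf{Expected obstacle.} The hardest step is justifying Lemma \ref{lem:upperbound} in our setting. It needs $X_1+X_2$ to be regularly varying, or at least to have a limiting $\alpha$-scale, so that the Karamata integral asymptotics apply to $\bar F_{X_1+X_2}$. This is exactly where closure of $\mathbb{V}_b$ under addition enters. I would first try to reduce to $\limsup$ statements: take $\limsup$ of $\frac{b(s)}{s}\int_s^\infty \bar F_X$, and combine monotonicity of $\bar F_X$ with the convex-order bound to get
\[
\limsup_{s\to\infty} b(s)\bar F_X(s)\le \kappa_Y,
\]
using the comparison $\int_{s}^{\lambda s}\bar F_X\ge (\lambda-1)s\bar F_X(\lambda s)$ and then letting $\lambda\downarrow1$ after optimizing. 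This gives an inequality on $\limsup b(s)P(|X_1+X_2|>s)$ that holds regardless of whether the limit exists. If the limit exists, it is the seminorm and the argument is complete.
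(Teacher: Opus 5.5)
Your main line is exactly the paper's proof: the reduction via $|X_1+X_2|\le|X_1|+|X_2|$, the comonotone bound of Lemma~\ref{lem:upperbound}, and comonotone additivity from Lemma~\ref{lem:comequal}. Your separate treatment of a summand in ${\cal N}_b$ is a small improvement, since Lemma~\ref{lem:upperbound} is stated only for regularly varying summands. The gap is the step you isolate yourself. Lemma~\ref{lem:upperbound} needs $\lim_{s\to\infty}b(s)P(X_1+X_2>s)$ to exist, so that Karamata's theorem applies to $\bar F_{X_1+X_2}$. Your only source for this, like the paper's, is closure of $\mathbb{V}_b$ under addition, and that proof only bounds a $\limsup$; this is the issue the authors' disclaimer refers to. Your $\limsup$ repair does not fix it. With $X=X_1+X_2$ and $Y=Y_1+Y_2$, the chain $(\lambda-1)s\,\bar F_X(\lambda s)\le\int_s^\infty\bar F_X\le\int_s^\infty\bar F_Y\sim s\bar F_Y(s)/(\alpha-1)$ only gives
\[
\limsup_{u\to\infty} b(u)\bar F_X(u)\le\frac{\lambda^\alpha}{(\lambda-1)(\alpha-1)}\,\kappa_Y .
\]
This bound blows up as $\lambda\downarrow1$. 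It is smallest at $\lambda=\alpha/(\alpha-1)$, where it equals $(\alpha/(\alpha-1))^\alpha\kappa_Y$. That is a quasi-triangle inequality with constant $\alpha/(\alpha-1)>1$, not the triangle inequality.

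This is not slack in your estimate: the $\limsup$ form of the triangle inequality is false. Since convex order holds for every coupling of the marginals, no convex-order argument can yield it. Let $X_1,X_2$ be standard Pareto$(\alpha)$ with $b(s)=s^\alpha$, so $||X_i||_b=1$. Along a sparse sequence $s_k$, couple $X_1\in(s_k/2,s_k)$ countermonotonically with $X_2\in(a_k,s_k/2]$, and vice versa. Here $a_k=s_k(2^{\alpha+1}-1)^{-1/\alpha}$, chosen so both pieces carry mass $(2^\alpha-1)s_k^{-\alpha}$. A convexity check of $w\mapsto w^{-1/\alpha}+(2^{\alpha+1}-w)^{-1/\alpha}$ shows $X_1+X_2\ge s_k$ on these pieces. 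Next, swap the rest of each upper tail with a bounded range of the other variable. This makes $\{X_1>s_k\}$ and $\{X_2>s_k\}$ asymptotically disjoint, since their overlap lies in the higher blocks and has mass $o(s_k^{-\alpha})$. Then $s_k^\alpha P(X_1+X_2>s_k)\to2^{\alpha+1}>2^\alpha=(||X_1||_b+||X_2||_b)^\alpha$, while at intermediate scales the normalized tail is near $2$. So $X_1+X_2\notin\mathbb{V}_b$, and its $\limsup$ exceeds the bound you claim. What your argument, and the paper's, actually proves is the triangle inequality for pairs whose sum has a limiting $\alpha$-scale. Examples are jointly regularly varying pairs, or pairs with one summand in ${\cal N}_b$. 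The statement must be restricted to a subspace where this holds; a $\limsup$ patch cannot rescue it.
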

	\begin{proof}
		First, note that $||\cdot||_b$ is clearly non-negative for any $\alpha>0$. Next, we show that  $||\cdot||_b$ is absolutely homogeneous. Let $X\in \mathbb{V}_b$ have $\alpha$-scale $\kappa_X$, which could be zero, and let $a\in \mathbb{R}\setminus \{0\}$ (the case $a=0$ is trivial). We then have:
		\[
		||aX||_b = \{\lim_{s\rightarrow \infty} b(s)P(|aX|>s)\}^\frac{1}{\alpha} \\
		= (\kappa_X |a|^{\alpha})^{\frac{1}{\alpha}} \\
		= |a| ||X||_b,
		\] 
		as desired. (Note, this requires only that $\alpha>0$.)
		
		All that remains is to show that the triangle inequality is satisfied. First note it is sufficient to consider only positive random variables. For, if $X_1, X_2 \in \mathbb{V}_b$ are arbitrary, we have
		\[
		P(|X_1+X_2|>s) \leq P(|X_1| + |X_2| >s) = P(||X_1|+|X_2||>s),
		\]
		which implies that $\kappa_{X_1+X_2}^{\frac{1}{\alpha}} \leq \kappa_{|X_1|+|X_2|}^{\frac{1}{\alpha}}$. Now, since $\kappa_X = \kappa_{|X|}$, we must simply show that $\kappa_{|X_1|+|X_2|}^{\frac{1}{\alpha}} \leq \kappa_{|X_2|}^{\frac{1}{\alpha}} + \kappa_{|X_2|}^{\frac{1}{\alpha}}$. From this, the triangle inequality will follow. Hence we may restrict our attention to positive random variables. The proof is now a simple application of the above lemmas.
		
		Thus, without loss of generality, let $X_1, X_2 \in \mathbb{V}_b$ be positive random variables with marginals $F_{X_1}, F_{X_2}$ respectively. Furthermore, $Y_1,Y_2$ be the comonotone version of $X_1,X_2$, that is $F_{Y_1} = F_{X_1}, F_{Y_2}=F_{X_2}$ and $(Y_1,Y_2)$ have the comonotone dependence structure. 
		
		By lemma \ref{lem:upperbound}, we can bound the scale of $X_1+X_2$ by that of $Y_1+Y_2$, giving
		\[
		\kappa_{X_1+X_2}^{\frac{1}{\alpha}} \leq \kappa_{Y_1+Y_2}^{\frac{1}{\alpha}}. \label{eq:3.1.2} \tag{3.1.2}
		\]
		Now, an application of lemma \ref{lem:comequal} shows that 
		\[
		\kappa_{Y_1+Y_2}^{\frac{1}{\alpha}} = \kappa_{Y_1}^{\frac{1}{\alpha}}+\kappa_{Y_2}^{\frac{1}{\alpha}}. \label{eq:3.1.3} \tag{3.1.3}
		\]
		Putting both \eqref{eq:3.1.2} and \eqref{eq:3.1.3} together, we have
		\[
		\kappa_{X_1+X_2}^{\frac{1}{\alpha}} \leq \kappa_{Y_1}^{\frac{1}{\alpha}}+\kappa_{Y_2}^{\frac{1}{\alpha}} = \kappa_{X_1}^{\frac{1}{\alpha}}+\kappa_{X_2}^{\frac{1}{\alpha}},
		\]
		where the last inequality follows from the fact that $X_i$ and $Y_i$ have identical marginals, and hence identical $\alpha$-scales. Thus, the triangle inequality is satisfied and the proof is complete.
	\end{proof}
	\begin{remark}
		If $\alpha < 1$, the triangle inequality will fail. Most literature on this topic is in the context of sub or super-additivity of Value-at-Risk (Var) for large investment portfolios. \cite{ME12} discuss the sub or super-additivity of VaR and show, using multivariate regular variation, that VaR is super-additive in the case $\alpha<1$. From the super-additivity of VaR, counterexamples are seen to be abundant. 
	\end{remark}
	\begin{remark}
		In the case of $\alpha=1$, the case is much more complicated. \cite{ME12} found the VaR to be additive if restricted to positive random variables, and subadditive in the general case. However, their approach relies on multivariate regular variation, and doesn't rule out the possibility that subadditivity will fail for two random variables that are not jointly regularly varying. This edge case is thus an open problem in general.
	\end{remark} 
	
	Finally, recall the earlier construction of $\mathbb{W}_b$ as a quotient space. The subspace ${\cal N}_b$ of elements with zero $\alpha$-scale is also the closed subspace of all $X\in \mathbb{V}_b$ with seminorm $||X||_b = 0$. Thus, using ${\cal N}_b = \{X\in \mathbb{V}_b: ||X||_b = 0\}$, we may form $\mathbb{W}_b = \mathbb{V}_b \mathbin / {\cal N}_b$ as before. In using this quotient construction, we obtain a normed vector space consisting of equivalence classes of tail equivalent regularly varying random variables. We thus have the following theorem, which follows immediately in light of the above discussion.
 
 	\begin{theorem} \label{thm:normspace}
	For a given normalizing function $b(s) = s^\alpha L_b(s)$ with $\alpha>1$, $(\mathbb{W}_b, ||\cdot||_b)$ is a normed vector space.
	\end{theorem}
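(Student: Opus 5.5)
The plan is to get the result from the general fact that a seminorm descends to a norm on the quotient by its null space. Theorem \ref{thm:2} already shows that $||\cdot||_b$ is a seminorm on $\mathbb{V}_b$ when $\alpha>1$. What remains is threefold: identify its null space with ${\cal N}_b$, show the induced map on classes is well defined, and check the norm axioms on $\mathbb{W}_b$.

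\textbf{Step 1 (null space).} I will show $\{X\in\mathbb{V}_b: ||X||_b=0\}={\cal N}_b$. If $X\in{\cal N}_b$, then $b(s)P(|X|>s)=b(s)P(X>s)+b(s)P(X<-s)\to 0$, so $\kappa_X=0$. Conversely, suppose $\kappa_X=0$ but $X\notin{\cal N}_b$. By the argument in Theorem \ref{thm:1}, $X$ is then regularly varying with $b(s)P(|X|>sx)\to\kappa x^{-\alpha}$ for some $\kappa>0$. Taking $x=1$ contradicts $\kappa_X=0$. This step also settles that the limit defining $||X||_b$ exists for every $X\in\mathbb{V}_b$: either $X$ is trivially normalized, or Theorem \ref{thm:1} gives regular variation.

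\textbf{Step 2 (well-definedness).} For $X\sim_b Y$ I need $||X||_b=||Y||_b$. The triangle inequality from Theorem \ref{thm:2} gives
\[
||X||_b\le ||Y||_b+||X-Y||_b=||Y||_b,
\]
and by symmetry the reverse inequality holds, so the two are equal. I then set $||[X]||_b:=||X||_b$.

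\textbf{Step 3 (norm axioms).} Absolute homogeneity and the triangle inequality pass to classes because the vector operations on $\mathbb{W}_b$ act on representatives: $a[X]+[Y]=[aX+Y]$. Definiteness follows from Step 1: $||[X]||_b=0$ means $X\in{\cal N}_b$, that is, $[X]=0$.

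The main obstacle is Step 1. It relies on the limits in the definition of $\mathbb{V}_b$ actually existing, and on the dichotomy of Theorem \ref{thm:1}, which says that if $b(s)P(|X|>sx)$ has a positive limit at one $x$ then the limit is positive at every $x$. Everything else is formal. Closure of $\mathbb{V}_b$ under addition is taken as given from the earlier proposition, consistent with the authors' caveat.
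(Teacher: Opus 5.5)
Your proposal is correct and follows the paper's own route: Theorem~\ref{thm:2} supplies the seminorm, its null space is ${\cal N}_b$, and the seminorm descends to a norm on the quotient; the paper asserts the null-space identification, well-definedness and definiteness as immediate, whereas you check them explicitly. Like the paper's argument (including Theorem~\ref{thm:2}, which tacitly needs $\kappa$ of a sum to exist), yours is conditional on the flagged closure of $\mathbb{V}_b$ under addition, and the converse in Step~1 does not need Theorem~\ref{thm:1}, since $b(s)P(|X|>sx)=\frac{b(s)}{b(sx)}\,b(sx)P(|X|>sx)\to x^{-\alpha}\cdot 0$ by regular variation of $b$.
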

	
	\subsection{Remarks on Convergence in $\mathbb{W}_b$} \label{sec:3.2}
	
	A natural question to ask is whether the space $\mathbb{W}_b$ is complete. At this point, the issue of completeness is unresolved.Nevertheless, some remarks on the difficulty of demonstrating completeness are in order, and best illustrated by a few examples. As the elements of $\mathbb{W}_b$ are rather strange, we rely on the following fact: the normed space$(\mathbb{W}_b, ||\cdot||_b)$ is complete if and only if $(\mathbb{V}_b, ||\cdot||_b)$ is complete. Thinking about completeness of $\mathbb{W}_b$ can thus be accomplished by thinking of random variables proper.
	
	\begin{example}
		Let $Z_1, Z_2,...$ be a sequence of i.i.d. standard normal random variables in $\mathbb{V}_b$. Consider the sequence of partial sums $S_n = \sum_{i=1}^n Z_i$. Under ordinary modes of convergence considered in probability theory, the sequence would not converge (at least without proper normalization in some cases). Under $||\cdot||_b$, the sequence of partial sums converges to any element in ${\cal N}_b$. This is a sequence of zero elements in $\mathbb{W}_b$.
	\end{example}
	
	The i.i.d. assumption can be replaced with an arbitrary dependence structure and the result is unchanged.
	
	\begin{example}
		Let $X$ be a Pareto random variables with tail index $\alpha>1$, and let $a_n\rightarrow \infty$ be a positive sequence diverging to infinity. Define the inverse truncated series by $Y_n = X 1_{\{X>a_n\}}$, which keeps only the tails at higher and higher levels. This sequence is indeed convergent, and is in fact a constant sequence. One may take a limit to be $X$, for example. 
	\end{example}
		
	This last example illustrates the difference between $||\cdot||_b$-convergence, and other modes of convergence typically considered in probability theory. If the sequence $a_n$ is taken to diverge fast enough, the sequence $Y_n$ can be made to converge in probability to 0. In such an event, there is a subsequence $Y_{n_k}$ converging almost surely to zero. Thus, $||\cdot||_b$-convergence is not necessarily related to either mode of converge frequently encountered in probability theory.
	
	Indeed, a kind of converse is possible. Let $\{X_n\}$ be a sequence of tail equivalent regularly varying random variables, and suppose it diverges in the $||\cdot||_b$ sense. A similar truncation to that considered in example 2 can be performed and similar statements concerning convergence in probability and almost surely can be achieved if the truncation level $a_n$ is carefully chosen. This further demonstrates the disconnect between $||\cdot||_b$-convergence and more familiar forms of convergence.

	\section{Modeling and The Tail Pairwise Dependence Measure} \label{sec:4}

    Even though the issue of completeness of $\mathbb{W}_b$ is unresolved (for now), later in this section we will focus attention on finite dimensional subspaces whose completeness will allow us to utilize the structure of $\mathbb{W}_b$ for modeling purposes.  We first discuss when and if $\mathbb{W}_b$ has an inner product.

	\subsection{The Tail Pairwise Dependence Measure as an Inner Product} \label{sec:4.2}
	
	In the presence of multivariate regular variation, the tail pairwise dependence measure can be viewed as providing a specific kind of structure on subspaces of $\mathbb{W}_b$. In particular, with some subtlety, it can be viewed as an inner product when the tail index is equal to 2. The content of this section can be viewed as continuing the discussion of dependence measures in section \ref{sec:2.2}, but does not use any results from that section. 
	
	First, we recall some familiar notions from inner product spaces. For a vector space $V$, denote an inner product on $V$ by $<\cdot,\cdot>$ and the norm induced by the inner product by  $||x||:=\sqrt{<x,x>}$. It is well known that in any inner product space, the parallelogram law holds: for all $x,y\in V$,
	\[
		||x+y||^2 + ||x-y||^2 = 2||x||^2 + 2||y||^2.
	\] 
	A converse is also true: Let $(V,||\cdot||)$ be any normed space. If $||\cdot||$ satisfies the parallelogram law, then there is an inner product $<\cdot, \cdot>$ on $V$ that induces the norm $||\cdot||$. We also recall the polarization identity. There are several equivalent forms of the polarization identity, so we present the form used below. If a norm $||\cdot||$ is induced by an inner product, then the inner product is given by
	\[
		<x,y> = \frac14 (||x+y||^2 - ||x-y||^2).
	\]
	
	The main theorem of this section shows that the parallelogram law holds for elements of $\mathbb{W}_b$ under the added assumption of multivariate regular variation. Before we present the main theorem, we state a familiar but useful lemma.
	
	\begin{lemma}
		Let $X_1,X_2$ be jointly regularly varying with exponent measures given by $\nu_{\bm{X}}$ and $H_{\bm{X}}$ respectively. Define the polar transformation $R=\sqrt{X_1^2 + X_2^2}$ and $\bm{\omega} = (\frac{X_1}{R}, \frac{X_2}{R})$. Then, for sets of the form $D_{r,\omega} = \{(R,\bm{\omega}) \in [0,\infty) \times \mathbb{S}^1 : R>f(\bm{\omega}), \bm{\omega}\in B\}$, positive function $f>0$, we have 
		\begin{equation*}
			\nu_{\bm{X}}(D_{r,\omega}) = \int_B \big(f(\bm{\omega}) \big)^{-\alpha} dH_{\bm{X}}(\bm{\omega}).
		\end{equation*}
	\end{lemma}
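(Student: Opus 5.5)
The plan is to derive the formula directly from the polar decomposition of the exponent measure, which by $-\alpha$-homogeneity reads $\nu_{\bm X}(\{R>r, \bm\omega\in A\}) = r^{-\alpha}H_{\bm X}(A)$ for Borel $A\subset\mathbb{S}^1$ and $r>0$. Equivalently, under the map $x\mapsto(\|x\|, x/\|x\|)$ (Euclidean norm, matching $R$), $\nu_{\bm X}$ is the product measure $\alpha r^{-\alpha-1}dr \times dH_{\bm X}(\bm\omega)$ on $(0,\infty)\times\mathbb{S}^1$. First I will establish this product structure. The identity $\nu_{\bm X}(\{R>r\}\times A)=r^{-\alpha}H_{\bm X}(A)$ determines $\nu_{\bm X}$ on the $\pi$-system of rectangles $(r_1,r_2]\times A$, since $\nu_{\bm X}((r_1,r_2]\times A) = (r_1^{-\alpha}-r_2^{-\alpha})H_{\bm X}(A)$. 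Both $\nu_{\bm X}$ (pulled back to polar coordinates) and the product measure agree on this $\pi$-system. Both are also $\sigma$-finite on $(0,\infty)\times\mathbb{S}^1$, for example on the sets $\{R>1/n\}$. The $\pi$-$\lambda$ theorem then gives equality on all Borel sets.

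With the product structure in hand, the set $D_{r,\omega}=\{R>f(\bm\omega),\bm\omega\in B\}$ is measurable, assuming $f$ is measurable. That assumption should be stated explicitly, since the lemma only says $f>0$. Applying Tonelli to the indicator gives
\[
\nu_{\bm X}(D_{r,\omega}) = \int_B \int_{f(\bm\omega)}^\infty \alpha r^{-\alpha-1}\,dr\, dH_{\bm X}(\bm\omega) = \int_B f(\bm\omega)^{-\alpha}\, dH_{\bm X}(\bm\omega).
\]
The inner integral is finite because $f>0$ and $\alpha>0$. If $f$ is not bounded away from zero, both sides may be infinite, and the identity still holds in $[0,\infty]$.

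An alternative route, in case one prefers to avoid product-measure language, is to approximate $f$ from below and above by simple functions $f_n=\sum_k c_k 1_{B_k}$ with $B_k$ partitioning $B$. On each piece the homogeneity identity applies directly, so
\[
\nu_{\bm X}(\{R>f_n,\bm\omega\in B\})=\sum_k c_k^{-\alpha}H_{\bm X}(B_k).
\]
Monotone convergence on both sides then finishes the argument.

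The main obstacle I expect is the measure-theoretic bookkeeping rather than any computation. Three points need care. The first is confirming that the homogeneity identity holds for all Borel $A$ and all $r>0$, not merely for $H_{\bm X}$-continuity sets; this is true because it is a statement about the limit measure itself, not about the convergence. The second is handling possible infinite mass near the origin, where the sets with $R\to0$ are not relatively compact. This is why I restrict to $R>1/n$ and use monotone convergence. The third is the fact that $\nu_{\bm X}$ puts no mass at infinity, which justifies working on $(0,\infty)\times\mathbb{S}^1$ rather than on the compactified space.
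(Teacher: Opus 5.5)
Your proposal is correct and follows the paper's route. The paper simply asserts the polar product form $d\nu_{\bm X} = \alpha r^{-(\alpha+1)}\,dr\,dH_{\bm X}(\bm\omega)$ and integrates out the radial variable over $\{r>f(\bm\omega)\}$, which is exactly your Tonelli step. Your $\pi$-$\lambda$ derivation of that product structure from the homogeneity decomposition, and your remark that $f$ must be measurable, fill in details the paper leaves implicit.
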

	\begin{proof}
		The integral expression follows immediately from the differential expression for $\nu_{\bm{X}}$: 
		\begin{equation*}
			d\nu_{\bm{X}} = \alpha r^{-(\alpha+1)}drdH_{\bm{X}}(\bm{\omega}).
		\end{equation*}
		
	We have 
		\begin{align*}
			\nu_{\bm{X}}(D_{r,\omega}) &= \int_{D_{r,\omega}} d\nu_{\bm{X}} \\
			&= \int_{\bm{\omega}\in B} \int_{\{r>f(\bm{\omega})\}} \alpha r^{-(\alpha+1)}drdH({\bm{\omega}}) \\
			&= \int_{\bm{\omega}\in B} \big(f(\bm{\omega})\big)^{-\alpha} dH_{\bm{X}}(\bm{\omega}).
		\end{align*}
	\end{proof}

	\begin{theorem} \label{thm:4}
		Let $X_1, X_2 \in \mathbb{V}_b \setminus {\cal N}_b$ with $\alpha = 2$. Furthermore, let them be jointly regularly varying with exponent measure $\nu_{\bm{X}}$, and angular measure $H_{\bm{X}}$ under the polar transformation. Then $||\cdot||_b$ satisfies the parallelogram law. 
	\end{theorem}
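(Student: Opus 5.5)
We express each of $\|X_1+X_2\|_b^2$, $\|X_1-X_2\|_b^2$, $\|X_1\|_b^2$ and $\|X_2\|_b^2$ as an integral against the angular measure $H_{\bm{X}}$, using the preceding lemma, and then compare integrands pointwise on $\mathbb{S}^1$. With $\alpha=2$ we have $\|Y\|_b^2=\kappa_Y$, so the parallelogram law reduces to the identity
\[
\kappa_{X_1+X_2}+\kappa_{X_1-X_2}=2\kappa_{X_1}+2\kappa_{X_2}.
\]

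The first step is to identify each $\kappa$ as an exponent-measure value. For any $\bm a=(a_1,a_2)\in\mathbb{R}^2$, the linear combination $a_1X_1+a_2X_2$ satisfies
\[
b(s)P(|a_1X_1+a_2X_2|>s)=b(s)P\big(s^{-1}\bm X\in A_{\bm a}\big), \qquad A_{\bm a}=\{\bm x:|a_1x_1+a_2x_2|>1\}.
\]
The set $A_{\bm a}$ is bounded away from the origin, and its boundary $\{|\bm a\cdot\bm x|=1\}$ is a pair of lines. Since $\nu_{\bm X}$ is $-2$-homogeneous, it charges no such boundary: in polar form this boundary is the graph $r=|\bm a\cdot\bm\omega|^{-1}$, which is $\nu_{\bm X}$-null by the product form $d\nu_{\bm X}=\alpha r^{-\alpha-1}dr\,dH_{\bm X}$. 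Vague convergence \eqref{eq:1.3.1} therefore gives $\kappa_{\bm a\cdot\bm X}=\nu_{\bm X}(A_{\bm a})$. I would state this step carefully as the key technical point: it uses vague convergence on a continuity set and nothing more.

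Next, write $A_{\bm a}$ in the polar coordinates $R,\bm\omega$ of the lemma. It becomes
\[
\{R>f(\bm\omega)\}, \qquad f(\bm\omega)=|a_1\omega_1+a_2\omega_2|^{-1},
\]
where we set $f=\infty$ on the set where $\bm a\cdot\bm\omega=0$ and restrict $B$ to the complement of that set. That set contributes nothing, since there the integrand below is $0$. Applying the lemma with $\alpha=2$ gives
\[
\kappa_{a_1X_1+a_2X_2}=\int_{\mathbb{S}^1}(a_1\omega_1+a_2\omega_2)^2\,dH_{\bm X}(\bm\omega).
\]

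Specialising $\bm a$ to $(1,1),(1,-1),(1,0),(0,1)$ and using $(\omega_1+\omega_2)^2+(\omega_1-\omega_2)^2=2\omega_1^2+2\omega_2^2$ pointwise gives the identity by linearity of the integral. As a byproduct, polarization yields
\[
\tfrac14\big(\|X_1+X_2\|_b^2-\|X_1-X_2\|_b^2\big)=\int\omega_1\omega_2\,dH_{\bm X}=\sigma(X_1,X_2)
\]
from \eqref{eq:1.3.2} with $\alpha=2$. This confirms that the TPDM is the induced inner product.

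The main obstacle is the first step, justifying that $b(s)P(|\bm a\cdot\bm X|>s)$ converges to $\nu_{\bm X}(A_{\bm a})$. This requires that the normalizing function $b$ in \eqref{eq:1.3.1} is the same $b$ that defines $\mathbb{V}_b$, and that $A_{\bm a}$ is relatively compact in $[-\bm\infty,\bm\infty]^2\setminus\{\bm 0\}$ with a $\nu_{\bm X}$-null boundary. The hypothesis $X_i\notin{\cal N}_b$ only ensures that the marginals are nondegenerate. It is not needed for the identity itself, which also holds when some combinations have zero scale, since the integrals are still valid.
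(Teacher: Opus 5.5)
Your proposal is correct and follows the paper's proof. Both arguments write $\|X_1\pm X_2\|_b^2$ as $\nu_{\bm X}$ of $\{|x_1\pm x_2|>1\}$, convert to angular integrals via the polar lemma, and conclude from $(\omega_1+\omega_2)^2+(\omega_1-\omega_2)^2=2\omega_1^2+2\omega_2^2$. Your version is cleaner in that the integrand $|\bm a\cdot\bm\omega|^2$ on all of $\mathbb{S}^1$ avoids the paper's split into $A_1,\dots,A_4$ and $P_1,\dots,P_4$, and you justify steps the paper uses without comment: the continuity-set step $b(s)P(s^{-1}\bm X\in A_{\bm a})\to\nu_{\bm X}(A_{\bm a})$ and the identity $\|X_i\|_b^2=\int\omega_i^2\,dH_{\bm X}$.
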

	\begin{proof}
		For now, we treat the tail index $\alpha$ as arbitrary. Define the sets
        \begin{align*}
			C_1 = \{(x_1, x_2) \in \mathbb{R}^2: |x_1+x_2|>1 \}, \\
			C_1^+ = \{(x_1, x_2) \in \mathbb{R}^2: x_1+x_2>1 \}, \\
			C_1^- = \{(x_1, x_2) \in \mathbb{R}^2: x_1+x_2<-1 \}.
		\end{align*}
        Then,
		\begin{align*}
			||X_1+X_2||_b^\alpha &= \lim_{s\rightarrow \infty} b(s)P(|X_1+X_2|>s) \\
			&= \lim_{s\rightarrow \infty} b(s)P\bigg(\frac{1}{s}(X_1, X_2)\in C_1\bigg) \\
			&= \nu_{\bm{X}}(C_1) = \nu_{\bm{X}}(C_1^+) + \nu_{\bm{X}}(C_1^-).
		\end{align*} 
		
		Switching to polar coordinates with $R = \sqrt{X_1^2 + X_2^2}$ and $\bm{\omega} = (\frac{X_1}{R}, \frac{X_2}{R})$, we can rewrite the sets $C_1^+, C_1^-$ as follows:
		
		\begin{align*}
			C_1^+ = \{(R, \bm{\omega}) \in [0, \infty) \times \mathbb{S}^1: R (\omega_1+\omega_2)>1 \}, \\
			C_1^- = \{(R, \bm{\omega}) \in [0, \infty) \times \mathbb{S}^1: R (\omega_1+\omega_2)<-1 \}.
		\end{align*}
		
		To compute $\nu_X(C_1^+) + \nu_X(C_1^-)$ using the lemma above, we rewrite the sets $C_1^+, C_1^-$ in a more manageable form. Define the sets $A_1 = \{\bm{\omega}\in \mathbb{S}^1: \omega_1+\omega_2 >0 \}$ and $A_2 = \{\bm{\omega}\in \mathbb{S}^1: \omega_1+\omega_2 < 0 \}$. Then
		\begin{align*}
			C_1^+ = \{(R, \bm{\omega}) \in [0, \infty) \times \mathbb{S}^1: R> \frac{1}{(\omega_1+\omega_2)}, \bm{\omega}\in A_1 \}, \\
			C_1^- = \{(R, \bm{\omega}) \in [0, \infty) \times \mathbb{S}^1: R> \frac{1}{-(\omega_1+\omega_2)}, \bm{\omega}\in A_2 \}.
		\end{align*}
		Utilizing the lemma above, we have
		
		\begin{align*}
			||X_1+X_2||_b^\alpha &= \int_{A_1} (\omega_1+\omega_2)^\alpha dH_{\bm{X}}(\bm{\omega}) + \int_{A_2} (-\omega_1-\omega_2)^\alpha dH_{\bm{X}}(\bm{\omega}).
		\end{align*}
		Using the same polar transformation, similar reasoning is used to compute $||X_1-X_2||_b^\alpha$. We have
		\begin{align*}
			||X_1-X_2||_b^\alpha &= \lim_{s \rightarrow \infty}b(s)P(|X_1-X_2|>s) \\
			&= \lim_{s \rightarrow \infty}b(s)P\bigg(\frac{1}{s} (X_1, X_2)\in C_2^+ \cup C_2^-\bigg)\\
			&= \nu_{\bm{X}}(C_2^+ \cup C_2^-).
		\end{align*}
		Just as above, the sets $C_2^+$ and $C_2^-$ are given by
		\begin{align*}
			C_2^+ = \{(R, \bm{\omega}) \in [0, \infty) \times \mathbb{S}^1: R> \frac{1}{(\omega_1-\omega_2)}, \bm{\omega}\in A_3 \}, \\
			C_2^- = \{(R, \bm{\omega}) \in [0, \infty) \times \mathbb{S}^1: R> \frac{1}{-(\omega_1-\omega_2)}, \bm{\omega}\in A_4 \}.
		\end{align*}
		In defining these sets, we have further defined $A_3 = \{\bm{\omega}\in \mathbb{S}^1: \omega_1-\omega_2 >0 \}$ and $A_4 = \{\bm{\omega}\in \mathbb{S}^1: \omega_1-\omega_2 < 0 \}$.
		Another application of the above lemma then gives 
		\begin{align*}
			||X_1-X_2||_b^\alpha &= \int_{A_3} (\omega_1-\omega_2)^\alpha dH_{\bm{X}}(\bm{\omega}) + \int_{A_4} (-\omega_1+\omega_2)^\alpha dH_{\bm{X}}(\bm{\omega}).
		\end{align*}
		Thus, we have 
		\begin{align*}
			||X_1+X_2||_b^\alpha+||X_1-X_2||_b^\alpha &=\int_{A_1} (\omega_1+\omega_2)^\alpha dH_{\bm{X}}(\bm{\omega}) + \int_{A_2} (-\omega_1-\omega_2)^\alpha dH_{\bm{X}}(\bm{\omega})\\ 
			&+ \int_{A_3} (\omega_1-\omega_2)^\alpha dH_{\bm{X}}(\bm{\omega}) + \int_{A_4} (-\omega_1+\omega_2)^\alpha dH_{\bm{X}}(\bm{\omega}). \label{eq:4.2.1}\tag{4.2.1}
		\end{align*}

		
		We now restrict our attention to the case $\alpha = 2$. Further, denote the points on the 45 degree line in each of the sets $A_1$ through $A_4$ as follows: $P_1=(\frac{\sqrt2}{2},\frac{\sqrt2}{2}), P_2=(\frac{-\sqrt2}{2},\frac{-\sqrt2}{2}), P_3=(\frac{\sqrt2}{2},\frac{-\sqrt2}{2}), P_4=(\frac{-\sqrt2}{2},\frac{\sqrt2}{2})$. We may simplify the relation \ref{eq:4.2.1} above as follows.
		
		\begin{align*}
		||X_1+X_2||_b^2+||X_1-X_2||_b^2 &=\int_{A_1\cap A_3} (\omega_1+\omega_2)^2 +(\omega_1-\omega_2)^2 dH_{\bm{X}}(\bm{\omega})\\ 
		&+ \int_{A_1\cap A_4} (\omega_1+\omega_2)^2 +(-\omega_1-\omega_2)^2 dH_{\bm{X}}(\bm{\omega}) \\
		&+ \int_{A_2\cap A_3} (-\omega_1-\omega_2)^2 +(\omega_1-\omega_2)^2 dH_{\bm{X}}(\bm{\omega}) \\
		&+ \int_{A_2\cap A_4} (-\omega_1-\omega_2)^2 +(-\omega_1+\omega_2)^2 dH_{\bm{X}}(\bm{\omega}) \\
		&+ \int_{P_1} (\omega_1+\omega_2)^2 dH_{\bm{X}}(\bm{\omega}) +\int_{P_2} (-\omega_1-\omega_2)^2 dH_{\bm{X}}(\bm{\omega}) \\
		&+\int_{P_3} (\omega_1-\omega_2)^2 dH_{\bm{X}}(\bm{\omega})+\int_{P_4} (-\omega_1+\omega_2)^2 dH_{\bm{X}}(\bm{\omega}).
		\end{align*}
		Note that each integral over $P_i$ is equal to $\int_{P_i} 2dH_{\bm{X}}(\bm{\omega})$, which in turn is equal to $\int_{P_i} 2\omega_1^2 + 2\omega_2^2dH_{\bm{X}}(\bm{\omega})$. Thus, expanding the squares in the above integrals and simplifying, we have:
		
		\begin{align*}
			&= \int_{\mathbb{S}^1 \setminus \cup P_i} 2\omega_1^2+2\omega_2^2 dH_{\bm{X}}(\bm{\omega}) + \int_{\cup P_i} 2\omega_1^2+2\omega_2^2 dH_{\bm{X}}(\bm{\omega}) \\
			&= \int_{\mathbb{S}^1} 2\omega_1^2+2\omega_2^2 dH_{\bm{X}}(\bm{\omega}) \\ 
			&= 2||X_1||_b^2 + 2||X_2||_b^2 
		\end{align*}
		Thus, the claim has been shown.
	\end{proof}

	If $\alpha=2$ and joint regular variation holds, the norm $||\cdot||_b$ satisfies the parallelogram law, and thus is induced by an inner product. The requirement of joint regular variation limits the space on which the inner product can be defined, as joint regular variation does not hold for all elements $X,Y\in \mathbb{W}_b$. Nevertheless, theorem \ref{thm:4} makes no assumption about the dimension of the underlying subspace.  For any subspace of $\mathbb{W}_b$, as long as any two elements in the space are jointly regularly varying, these results apply. We further discuss some examples below. The following corollary provides a connection between this inner product and the tail pairwise dependence measure, which is useful for applied problems and calculations.
	
	\begin{corollary} \label{cor:1}
		For $\alpha=2$, the inner product inducing $||\cdot||_b$ is given by the tail pairwise dependence measure.
	\end{corollary}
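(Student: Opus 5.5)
The plan is to apply the polarization identity to the norm $\|\cdot\|_b$ and evaluate the two terms with the angular measure representation already obtained in the proof of Theorem \ref{thm:4}. By Theorem \ref{thm:4}, when $\alpha=2$ and $X_1,X_2$ are jointly regularly varying, $\|\cdot\|_b$ satisfies the parallelogram law. It is therefore induced by the inner product
\[
<X_1,X_2> = \tfrac14\big(\|X_1+X_2\|_b^2 - \|X_1-X_2\|_b^2\big),
\]
and it suffices to show that the right-hand side equals $\sigma(X_1,X_2)$ as defined in (\ref{eq:1.3.2}) with $\alpha=2$.

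From the proof of Theorem \ref{thm:4}, with $\alpha=2$ we have
\[
\|X_1+X_2\|_b^2=\int_{A_1\cup A_2}(\omega_1+\omega_2)^2\,dH_{\bm X}(\bm\omega),
\qquad
\|X_1-X_2\|_b^2=\int_{A_3\cup A_4}(\omega_1-\omega_2)^2\,dH_{\bm X}(\bm\omega).
\]
The two tail pieces combine because the integrand is squared, so the sign split disappears. The sets omitted from these domains are $\{\omega_1+\omega_2=0\}=\{P_3,P_4\}$ and $\{\omega_1-\omega_2=0\}=\{P_1,P_2\}$, and on them the respective integrands vanish. Both integrals can therefore be taken over all of $\mathbb{S}^1$. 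Subtracting and using $(\omega_1+\omega_2)^2-(\omega_1-\omega_2)^2=4\omega_1\omega_2$ gives
\[
<X_1,X_2>=\int_{\mathbb{S}^1}\omega_1\omega_2\,dH_{\bm X}(\bm\omega).
\]

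When $\alpha=2$, $|\omega_i|^{\alpha/2}=|\omega_i|$, so $\mathrm{sgn}(\omega_1)\mathrm{sgn}(\omega_2)|\omega_1||\omega_2|=\omega_1\omega_2$. Hence the last integral is exactly $\sigma(X_1,X_2)$. One should also record consistency: $\sigma(X_1,X_1)=\int\omega_1^2\,dH_{\bm X}=\|X_1\|_b^2$, by the same lemma with $f(\bm\omega)=1/|\omega_1|$.

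The main obstacle is conceptual rather than computational, and concerns where the inner product lives. The angular measure $H_{\bm X}$ is taken with respect to the Euclidean norm, which the polar transformation in the earlier lemma already fixes. The statement must also be read on a subspace of $\mathbb{W}_b$ whose elements are pairwise jointly regularly varying. In particular the degenerate points $P_i$ must be handled as above, and the case where $X_1\pm X_2\in{\cal N}_b$ must be covered. That case is harmless, since the corresponding integral is then zero, which is consistent with the limit representation.
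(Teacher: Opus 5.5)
Your proof is correct and is essentially the paper's proof: apply the polarization identity, use the angular-measure representations of $\|X_1\pm X_2\|_b^2$ from the proof of Theorem~\ref{thm:4}, and conclude with the pointwise identity $(\omega_1+\omega_2)^2-(\omega_1-\omega_2)^2=4\omega_1\omega_2$, which matches (\ref{eq:1.3.2}) at $\alpha=2$. You note that the integrands already vanish on the excluded directions $P_1,\dots,P_4$, so both integrals can be taken over all of $\mathbb{S}^1$; this is cleaner than the paper's separate bookkeeping over $P_1\cup P_2$ and $P_3\cup P_4$, whose displayed version writes $(\omega_1-\omega_2)^2$ on $P_1\cup P_2$ where $(\omega_1+\omega_2)^2$ is meant.
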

	\begin{proof}
		The proof is a simple investigation of the polarization identities. We denote the function inducing $||\cdot||_b$ by $<\cdot, \cdot>$. Although any of the polarization identities can be used, we will use the following form:
		
		\begin{equation*}
			<x,y> = \frac14 \big(||x+y||^2 - ||x-y||^2\big).
		\end{equation*}

		Let $X_1$ and $X_2$ be jointly regularly varying with tail index $\alpha=2$ and angular measure $H_{\bm{X}}$. We employ the same transformation as in the above theorem and continue to write the points $P_1=(\frac{\sqrt2}{2},\frac{\sqrt2}{2}), P_2=(\frac{-\sqrt2}{2},\frac{-\sqrt2}{2}), P_3=(\frac{\sqrt2}{2},\frac{-\sqrt2}{2}), P_4=(\frac{-\sqrt2}{2},\frac{\sqrt2}{2})$. We then have

		\begin{align*}
			<X_1, X_2> &= \frac14 \big(||X_1+X_2||_b^2 - ||X_1-X_2||_b^2\big) \\
			&= \frac14 \bigg( \int_{\mathbb{S}^1} \sign(\omega_1+\omega_2)^2(\omega_1+\omega_2)^2 dH_{\bm X}(\omega) - \int_{\mathbb{S}^1} \sign(\omega_1-\omega_2)^2 (\omega_1 - \omega_2)^2 dH_{\bm X}(\omega) \bigg) \\
            &= \frac14 \bigg( \int_{\mathbb{S}^1 \setminus P_3 \cup P_4} (\omega_1+\omega_2)^2 dH_{\bm X}(\omega) - \int_{\mathbb{S}^1 \setminus P_1\cup P_2} (\omega_1 - \omega_2)^2 dH_{\bm X}(\omega) \bigg) \\
            &= \frac14 \bigg( \int_{\mathbb{S}^1 \setminus \cup P_i} (\omega_1+\omega_2)^2 -(\omega_1-\omega_2)^2 dH_{\bm X}(\omega) + \int_{P_1\cup P_2} (\omega_1 - \omega_2)^2 dH_{\bm X}(\omega) \\ &\qquad- \int_{P_3\cup P_4} (\omega_1 - \omega_2)^2 dH_{\bm X}(\omega) \bigg) \\
            &= \int_{\mathbb{S}^1 \setminus \cup P_i} \omega_2 \omega_2 dH_{\bm X}(\omega) + \frac14 \bigg(\int_{P_1\cup P_2} (\omega_1 - \omega_2)^2 dH_{\bm X}(\omega) -\int_{\setminus P_3\cup P_4} (\omega_1 - \omega_2)^2 dH_{\bm X}(\omega) \bigg).
		\end{align*}
        Now notice the two terms in parentheses above can be written as
        \[
        \int_{P_1\cup P_2} (\omega_1 - \omega_2)^2 dH_{\bm X}(\omega) = 
        \int_{P_1\cup P_2} (2\omega_1)^2 dH_{\bm X}(\omega) = 
        \int_{P_1\cup P_2} 4 \omega_1 \omega_2 dH_{\bm X}(\omega), 
        \]
        and
        \[
        \int_{P_3\cup P_4} (\omega_1 - \omega_2)^2 dH_{\bm X}(\omega) =
        \int_{P_3\cup P_4} (2 \omega_1)^2 dH_{\bm X}(\omega) =
        \int_{P_3\cup P_4} -4 \omega_1 \omega_2 dH_{\bm X}(\omega).
        \]
        Thus, continuing our calculations, we have
        \begin{align*}
            & \quad\int_{\mathbb{S}^1 \setminus \cup P_i} \omega_2 \omega_2 dH_{\bm X}(\omega) + \frac14 \bigg(\int_{P_1\cup P_2} (\omega_1 - \omega_2)^2 dH_{\bm X}(\omega) -\int_{P_3\cup P_4} (\omega_1 - \omega_2)^2 dH_{\bm X}(\omega) \bigg) \\
            &= \int_{\mathbb{S}^1 \setminus \cup P_i} \omega_1 \omega_2 dH_{\bm X}(\omega) + \int_{P_1 \cup P_2} \omega_1 \omega_2 dH_{\bm X}(\omega) + \int_{P_3 \cup P_4} \omega_1 \omega_2 dH_{\bm X}(\omega) \\
			&= \int_{\mathbb{S}^1} \omega_1 \omega_2 dH_{\bm X}(\omega).       
        \end{align*}
		Thus, the inner product is given by the tail pairwise dependence measure.		
	\end{proof}

    As mentioned above, theorem \ref{thm:4} and corollary \ref{cor:1} above make no assumption about the dimension of the underlying subspace on which the inner product exists. As long as any two elements in such a subspace are jointly regularly varying, these results apply. In the next section we discuss some such spaces with a mind toward statistical modeling. Our final corollary notes the assumption $\alpha = 2$ is necessary to obtain an inner product structure.

	\begin{corollary}
		If $\alpha \neq 2$, the norm $||\cdot||_b$ does not satisfy the parallelogram law. 
	\end{corollary}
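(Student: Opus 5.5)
The plan is to exhibit, for each $\alpha \neq 2$, a specific jointly regularly varying pair $(X_1,X_2)$ for which the parallelogram law fails. We reuse the computation in the proof of Theorem \ref{thm:4}. Formula \eqref{eq:4.2.1} was derived for arbitrary $\alpha$, and it can be rewritten as
\[
||X_1+X_2||_b^\alpha+||X_1-X_2||_b^\alpha = \int_{\mathbb{S}^1} \big(|\omega_1+\omega_2|^\alpha + |\omega_1-\omega_2|^\alpha\big)\, dH_{\bm X}(\bm\omega).
\]
By the lemma preceding Theorem \ref{thm:4}, applied to the sets $\{|x_i|>1\}$, we also have $||X_i||_b^\alpha = \int_{\mathbb{S}^1} |\omega_i|^\alpha dH_{\bm X}(\bm\omega)$. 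Every relevant norm is therefore an explicit functional of the angular measure. Choosing $H_{\bm X}$ to be a sum of point masses reduces the problem to elementary algebra.

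The simplest choice is asymptotic independence with equal scales. Take $X_1,X_2$ independent, each symmetric with $b(s)P(|X_i|>s)\to 1$. Then $H_{\bm X}$ places mass $1/2$ at each of $(\pm1,0)$ and $(0,\pm1)$. This gives $||X_1||_b=||X_2||_b=1$. Since $|\omega_1\pm\omega_2|=1$ at each atom, we also get $||X_1\pm X_2||_b^\alpha = 2$, so $||X_1\pm X_2||_b = 2^{1/\alpha}$.

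The parallelogram law, stated with squared norms, would require $2\cdot 2^{2/\alpha} = 4$, that is $2^{2/\alpha}=2$. This holds if and only if $\alpha=2$, so for every $\alpha\neq 2$ the law fails. These random variables lie in $\mathbb{V}_b\setminus{\cal N}_b$ and are jointly regularly varying. For example, take symmetrized Pareto variables with $b(s)=s^\alpha$ and scale chosen so that $\kappa=1$; independence gives the stated angular measure by the standard result that the exponent measure of independent regularly varying components concentrates on the axes.

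The one point needing care is the claim that $||X_1+X_2||_b^\alpha = 2$. This is the familiar fact that, for independent regularly varying variables, the tail of the sum is the sum of the tails. I would cite that fact directly rather than rely only on the angular-measure bookkeeping. If one prefers an example that does not use independence, the comonotone-type measure with mass at $(1/\sqrt2,1/\sqrt2)$ and $(-1/\sqrt2,-1/\sqrt2)$ gives $||X_1+X_2||_b^\alpha = 2^{\alpha/2}$ and $||X_1-X_2||_b=0$. The law then reads $2^{2}\cdot 2^{-1}\cdot\ldots$, which again forces $\alpha=2$ and serves as a cross-check. Apart from this point, the argument is direct computation.
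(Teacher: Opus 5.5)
Your main argument is correct, and it takes a genuinely different, more rigorous route than the paper. The paper's proof is a one-line appeal to ``inspection'' of \eqref{eq:4.2.1}, asserting that $||X_1+X_2||_b^2+||X_1-X_2||_b^2$ will not reduce to $2||X_1||_b^2+2||X_2||_b^2$ in general. It never produces a pair for which the identity actually fails. It also does not address how the $\alpha$-th powers in \eqref{eq:4.2.1} relate to the squared norms in the law.

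You supply exactly the missing witness, and you handle the exponents correctly by computing the norms themselves before squaring. Your example in fact shows more. For independent symmetric $X_1,X_2$ of unit $\alpha$-scale, the same integral formula gives $||aX_1+bX_2||_b^\alpha=|a|^\alpha+|b|^\alpha$. So their span in $\mathbb{W}_b$ is isometric to $\mathbb{R}^2$ with the $\ell^\alpha$ (quasi-)norm, and $2\cdot 2^{2/\alpha}\neq 4$ is the classical failure of the parallelogram law there.

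Backing up $||X_1\pm X_2||_b^\alpha=2$ with the fact that $\kappa_{X_1\pm X_2}=\kappa_{X_1}+\kappa_{X_2}$ for independent summands (the relations in Appendix~\ref{app:A1}) is a good idea. Since every variable involved is genuinely regularly varying, your argument also covers $0<\alpha\le 1$. It does not lean on the closure-under-addition step the authors flag in their disclaimer.

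Drop the final ``cross-check'', however, because it is wrong. With mass $\tfrac12$ at each of $(\tfrac{\sqrt2}{2},\tfrac{\sqrt2}{2})$ and $(-\tfrac{\sqrt2}{2},-\tfrac{\sqrt2}{2})$ you get $||X_i||_b^2=\tfrac12$, $||X_1+X_2||_b^2=2$ and $||X_1-X_2||_b^2=0$. Both sides of the parallelogram law then equal $2$ for every $\alpha$. This is forced: $||X_1-X_2||_b=0$ means $X_1$ and $X_2$ are the same element of $\mathbb{W}_b$, and for $x=y$ the law reads $||2x||^2=4||x||^2$, which holds in any normed space by homogeneity.

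More generally, any angular measure carried by a single antipodal pair of directions makes $X_1,X_2$ linearly dependent in $\mathbb{W}_b$, and the law always holds for such pairs. A witness must involve two linearly independent classes, as your independent pair does.
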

	\begin{proof}
		Inspection of equation \eqref{eq:4.2.1} in theorem 1 above shows that 
		\begin{align*}
			||X_1+X_2||_b^2 + ||X_1+X_2||_b^2
		\end{align*}
		will not simplify to $2||X_1||_b^2 + 2||X_2||_b^2$ in general. Thus, the norm does not satisfy the parallelogram law in general. 
	\end{proof}
	
	If $\alpha \neq 2$, the norm $||\cdot||_b$ does not come from an inner product, even on an appropriate subspace of $\mathbb{W}_b$. The full space $\mathbb{W}_b$ cannot be an inner product space for any $\alpha \neq 2$; for, if it were, every subspace would also be an inner product space. Moreover, even nicely behaved subspaces like those mentioned above that enjoy the property that any two elements are jointly regularly varying, are not inner product spaces. However, for $\alpha=2$, the situation is still not fully understood. While theorem \ref{thm:4} shows that certain subspaces of $\mathbb{W}_b$ are indeed inner product spaces, extending this to the full space will require different techniques. 

	As a final remark, we mention a connection between the tail pairwise dependence measure as an inner product and the extremal covariance of section \ref{sec:2.2}. The form of the extremal covariance in \ref{eq:2.2.3} is precisely the polarization identity utilized in corollary \ref{cor:1} as the scale functional $S$ is just the norm $||\cdot||_b$ for $\alpha>1$. Indeed, assuming $\alpha=2$ and multivariate regular variation, the tail pairwise dependence measure is exactly the extremal covariance with the constants $a$ and $b$ both set to $1$. In this case, an extremal correlation can be derived using the Cauchy-Schwarz inequality, and would be preferable to the quantity given in \ref{eq:2.2.4}.
    
	\subsection{Subspaces and Modeling Frameworks} \label{sec:4.1}

	Most if not all modeling scenarios involve finitely many observations from finitely many random variables. With this in mind, we begin with finite dimensional subspaces of $\mathbb{W}_b$ that may provide a useful modeling framework for extremes. 
	
	Suppose we have $p$ regularly random variables $\{X_1, ..., X_p\} \subset \mathbb{V}_b$ each with nonzero scale, thought of as observed variables. We may easily form the $p$ dimensional subspace consisting of all linear combinations of them: $S_p := span\{X_1, ..., X_p\} \subset \mathbb{V}_b$. Passing to equivalence classes, we consider $S_p$ a subspace of $\mathbb{W}_b$. The space $S_p$ has a canonical basis given by $B_{S_p} = \{X_1, ..., X_p\}$ inherits the norm $||\cdot||_b$ from $\mathbb{V}_b$, and as a finite dimensional space it is complete. 
    If the dimension of $span\{X_1, ..., X_p\}$ is $p' < p$, (i.e., some of $(X_1, \ldots, X_p)$ are equivalent to linear combinations of the other elements, then identify linearly independent $X_1, \ldots, X_{p'}$ and proceed.

	We then have a finite dimensional Banach space $(S_p, ||\cdot||_b)$, which for simplicity we will assume is $p$ dimensional with basis $\{X_1,...,X_p\}$. If these basis elements arise as observed random variables, then in the context of extremes it seems reasonable to assume that they are jointly regularly varying. Consequently, any element of $S_p$, is jointly regularly varying with any other element (see \cite{BDM02} proposition A.1, or \cite{KulikSoulier2020} proposition 2.1.12). We thus have a Banach space of tail equivalent regularly varying random variables, whose norm has a meaningful connection to the random variables used for modeling, and where any collection of elements thereof are multivariate regularly varying. If the elements of $S_p$ have tail index $\alpha=2$, after marginal transformations if necessary, the results from section \ref{sec:4.1} apply, and $S_p$ carries an inner product that measures extremal dependence. Such a space provides a convenient modeling framework for extremes.
	
	While applied modeling problems will typically be limited to finitely many observable variables, in which case the space $S_p$ will suffice as a modeling framework, larger spaces may provide a more natural setting for specific problems. We sketch one such space with applications in time series. 
	
	Let $\{Z_i\}_{i \geq 1}$ be an i.i.d. sequence of regularly varying random variables with normalizing function $b$ and tail index $\alpha =2$. Define the discrete space
	\[
		\mathbb{D}_b = \bigg\{X: X = \sum_{i=1}^{\infty} \psi_i Z_i, \text{with} \sum |\psi_i| < \infty \bigg\}.
	\]
	One can easily show that $\mathbb{D}_b$ is well defined and is a subspace of $\mathbb{W}_b$. Furthermore, any two elements of $\mathbb{D}_b$ are jointly regularly varying, and so the results of section \ref{sec:4.1} apply (see \cite{HS08}). In particular, the tail pairwise dependence measure acts as an inner product for $\mathbb{D}_b$.	The space $\mathbb{D}_b$ provides a framework for linear processes with heavy tails, and can thus serve as a modeling framework for analysis of heavy tailed time series.

	\section{Discussion} \label{sec:5}

	We proposed a new vector space approach to EVA. The foundation of this new framework is $\mathbb{W}_b$, a normed vector space of (equivalence classes) of tail equivalent regularly varying random variables. We have used $\mathbb{W}_b$ and its defining properties to define new dependence measures that meaningfully track dependence in the extreme tail, while at the same time do not require multivariate regular variation. In addition, we show one such measure is precisely the tail pairwise dependence measure and acts as an inner product for subspaces of $\mathbb{W}_b$ defined using multivariate regular variation.
	
	Using this vector space approach and the tail pairwise dependence measure, we outline an applied modeling framework for EVA in section \ref{sec:4} that mirrors covariance modeling in non-extreme statistics. All the tools from Hilbert space theory are now available for modeling extremes. In particular we remark that the projection theorem can be used for prediction at extreme levels of observed variables. Furthermore, the projection theorem can  be used to define a partial tail dependence measure, in a way comparable to the partial correlation coefficient. This work provides a general setting for these methods, which before had relied on specialized constructions. 
	
	Our work also helps to further explain some existing choices in previous literature. The tail pairwise dependence measure (\ref{eq:1.3.2}) gives a measure of tail dependence for any tail index.
    Several previous works (\cite{CT19}, \cite{mhatre2024}, \cite{lee2021pred}) have restricted attention to the case $\alpha = 2$ because of the tail pairwise dependence matrix's `nice properties'.
    In light of the results in section \ref{sec:4.2}, these nice properties can be understood to come from the nature of the tail pairwise dependence measure as an inner product. Furthermore, the restriction to $\alpha=2$ can now be fully justified: as the tail pairwise dependence measure is not an inner product for any other value of $\alpha$, the restriction to $\alpha=2$ is both convenient and necessary in order for the tail pairwise dependence measure to enjoy desirable properties.
	
	The vector space approach outlined above also raises new research questions. While not strictly necessary for applied modeling, they are of interest in their own right. The first question is whether the space $(\mathbb{W}_b, ||\cdot||_b)$ is complete, that is, whether any of them are Banach spaces. This question is not easily answered. The notion of convergence that $||\cdot||_b$ defines is not easily connected to traditional notions of convergence found in probability theory. This limits the tools available for answering the question of completeness. 
	
	A second question of note is whether any $\mathbb{W}_b$ for a given normalizing function is an inner product space. The corollaries in section \ref{sec:4} provide a partial answer to this question. If $\alpha \neq 2$, the space $(\mathbb{W}_b, ||\cdot||_b)$ cannot be an inner product space. However, if $\alpha=2$ this remains an open question. If this question could be answered in the affirmative, the inner product could be seen as a generalization of the tail pairwise dependence measure beyond the framework of multivariate regular variation. If such an extension exists, it may be of use as a tool for extreme value analysis. It would be one of the few methods that does not depend on the widely used framework of multivariate regular variation.

	\bibliography{VS}

	\appendix
	\section{Omitted Proofs and Details}

	\subsection{Asymptotic Dependence Measures} \label{app:A1}

    This appendix contains proofs of several properties of the extremal covariance and correlation outlined in section \ref{sec:2.2}. First, suppose that $Y=cX$ for some $c\neq 0$. We show that $ER(X,Y)\in \{-1,1\}$. Recalling the definition of $S^2(\cdot)$, we have
    \[
    S^2(aX+bY) = S^2\bigg(\frac{X}{\kappa_X^{\frac{1}{\alpha}}}+\frac{cX}{|c|\kappa_X^{\frac{1}{\alpha}}}\bigg) = \kappa_X^{\frac{2}{\alpha}}\bigg(\kappa^{-\frac{1}{\alpha}}+\sign(c)\kappa^{-\frac{1}{\alpha}} \bigg)^2=
    \begin{cases}
        4 & \sign(c)=1 \\
        0 & \sign(c)=-1
    \end{cases}.
    \]
    A similar computation will yield
    \[
    S^2(aX-bY) = S^2\bigg(\frac{X}{\kappa_X^{\frac{1}{\alpha}}}-\frac{cX}{|c|\kappa_X^{\frac{1}{\alpha}}}\bigg) = \kappa_X^{\frac{2}{\alpha}}\bigg(\kappa^{-\frac{1}{\alpha}}-\sign(c)\kappa^{-\frac{1}{\alpha}} \bigg)^2=
    \begin{cases}
        4 & \sign(c)=-1 \\
        0 & \sign(c)=1 
    \end{cases}.
    \]
    Combining these two in the definition of $ER(X,Y)$ yields
    \[
    ER(X,cX) = 
    \begin{cases}
        1 & \sign(c)=1 \\
        -1 & \sign(c)=-1 \\
    \end{cases}
    \]
    as desired.
    
    We now show the extremal covariance is equal to zero under asymptotic independence. First, note that that $EC(X,Y) =0$ if and only if $S^\alpha(X+Y) = S^\alpha (X-Y)$, or equivalently if and only if $\kappa_{X+Y} = \kappa_{X-Y}$. Now suppose the tails of $X,Y$ satisfy the asymptotic independence conditions:
    \begin{align*}
    b(s)P(X>s,Y>s)\rightarrow 0 \\
    b(s)P(X>s,Y<-s)\rightarrow 0 \\
    b(s)P(X<-s,Y>s)\rightarrow 0 \\
    b(s)P(X<-s,Y<-s)\rightarrow 0 
    \end{align*}
    Under these conditions, it is not difficult to show the following:
    \begin{align*}
    \kappa_{X+Y}^+ &= \kappa_X^+ + \kappa_Y^+ \\
    \kappa_{X+Y}^{-} &= \kappa_X^- + \kappa_Y^- \\
    \kappa_{X-Y}^+ &= \kappa_X^+ + \kappa_Y^- \\
    \kappa_{X-Y}^{-} &= \kappa_X^- + \kappa_Y^+     
    \end{align*}    
    See \cite{JM06} for a discussion of asymptotic independence conditions and the asymptotic behavior of sums. 

    Thus under asymptotic independence
    \begin{align*}
    \kappa_{X+Y} &= \kappa_{X+Y}^++\kappa_{X+Y}^{-} \\
    &= \kappa_X^+ + \kappa_Y^+ + \kappa_X^- + \kappa_Y^- \\
    &= \kappa_{X-Y}^{+} + \kappa_{X-Y}^- \\
    &= \kappa_{X-Y}
    \end{align*}
    Hence, $EC(X,Y)=0$ as desired. In light of the discussion above, it is also not difficult to see that $EC(X,Y)=0$ does not imply asymptotic independence. The $\alpha$-scales can balance in such a way that $EC(X,Y)=0$, but the relations between the left and right $\alpha$-scales under asymptotic independence do not hold.
	
\end{document}